\newtheorem{thm}{Theorem}[section]
\newtheorem{prop} [thm]{Proposition}
\newtheorem{lem} [thm]{Lemma}
\newtheorem{cor}[thm]{Corollary}
\newtheorem*{thm*}{Theorem}
\theoremstyle{definition}
\theoremstyle{remark}
\newtheorem{rem}[thm]{Remark}
\newtheorem{exem}[thm]{Example}
\def\C{\mathbb{C}}
\def\R{\mathbb{R}}
\def\Z{\mathbb{Z}}
\def\N{\mathbb{N}}
\def\P{\mathbb{P}}
\def\A{\mathbb {A}}
\def\O{\mathcal O}
\def\homm#1#2#3{\mathrm{Hom}_{#1}(#2,#3)}
\def\exten#1#2#3#4{\mathrm{Ext}_{#2}^{#1}(#3,#4)}
\def\spec#1{\mathrm{Spec}(#1)}
\def\carac#1{\mathrm{char}(#1)}
\begin{document}

\title{Mennicke symbols, $K$-cohomology and a Bass-Kubota theorem}
\author{J. Fasel}
\date{}

\address{Jean Fasel\\
Mathematisches Institut der Universit\"at M\"unchen\\ 
Theresienstrasse 39\\ 
D-80333 M\"unchen}
\email{jean.fasel@gmail.com}

\begin{abstract}
If $A$ is a smooth algebra of dimension $d\geq 2$ over a perfect field $k$ of characteristic different from $2$, then we show that the universal Mennicke symbol $MS_{d+1}(A)$ is isomorphic to the $K$-cohomology group $H^d(A,K_{d+1})$. When $k$ is algebraically closed and $S$ is a smooth surface, we further prove an analogue of the Bass-Kubota theorem for curves. Namely, there is a natural isomorphism $MS_2(A)\simeq Um_2(S)/SL_2(S)\cap E_3(S)$.  
\end{abstract}

\maketitle

\pagenumbering{arabic}



\section*{Introduction}

As its title indicates, this paper mainly deals with Mennicke symbols. These symbols were extensively studied, for instance in the solution of the congruence subgroup problem (\cite{Bass67}), the relations between Milnor $K$-theory and Quillen $K$-theory (\cite{Suslin82b}) or the computation of orbit sets of unimodular rows (\cite{Suslin80}). One of the goals of this article is to show that if $A$ is a smooth algebra of dimension $d\geq 4$ over a perfect field $k$ of characteristic different from $2$, then the universal Mennicke symbol $MS_{d+1}(A)$ has a cohomological interpretation. More precisely, it is isomorphic to the $K$-cohomology group $H^d(A,K_{d+1})$ (here it doesn't matter if we consider Milnor or Quillen $K$-groups). In this sense, this paper is a sequel of \cite{Fasel08b} where it was shown that the universal weak Mennicke symbol has a cohomological interpretation in the situation above. Weak Mennicke symbols were introduced by van der Kallen in order to understand orbit sets of unimodular rows, and it is clear by construction that there is a surjective homomorphism from the universal weak Mennicke symbol to the universal Mennicke symbol. Our cohomological approach allows us to compare these two symbols and derive some results on prestabilization in $K_1$. We are also able to compute the universal Mennicke symbol in some situations, for instance in the case of smooth rational real algebras, as well as to show that this symbol can be big over fields of small cohomological dimension.  The methods we use can be seen as an extension of the methods used in \cite{Fasel10b} to prove that stably free modules of rank $d-1$ are free over smooth affine algebras of dimension $d$ over an algebraically closed field. 

In the second part of the paper, we also study the Mennicke symbols of length $2$. We use Grothendieck-Witt groups to prove that if $S$ is a smooth surface over an algebraically closed field $k$ then $SL_2(S)\cap ESp_4(S)=SL_2(S)\cap E_3(S)$. This allows to deduce that 
$$MS_2(S)=Um_2(S)/SL_2(S)\cap E_3(S)=Um_2(S)/SL_2(S)\cap ESp_4(S),$$ 
which is the analogue of the Bass-Kubota theorem for Dedekind rings (see \cite{Kubota65} for instance). We expect also that the Bass-Kubota theorem will be true relative to an ideal, thus allowing to extend the results of \cite{Suslin79} about reciprocity laws to smooth surfaces. Nevertheless, the proof of this fact will require a Bass-Kubota theorem for some singular surfaces and we don't know at the moment how to deal with this problem. We hope to be able to do it in further work. 

The organization of the paper is as follows. In Section \ref{symbols}, we first recall the definitions of a Mennicke symbol and of a weak Mennicke symbol of length $n\geq 3$ associated to a unimodular row of length $n\geq 3$. We then recall the definitions of the sheaves we will need for the comparison theorem. In particular, we introduce the $K$-cohomology groups $H^i(X,K_j)$ associated to a regular scheme $X$. We then state and prove the comparison theorem, which says that if $A$ is a smooth algebra of dimension $d\geq 3$ over a perfect field $k$ with $\carac k\neq 2$, then the universal Mennicke symbol of length $d+1$ is isomorphic to $H^d(A,K_{d+1})$. The proof is an easy consequence of the description of the universal weak Mennicke symbol obtained in \cite[Theorem 4.9]{Fasel08b}.

In Section \ref{stab}, we use cohomological methods to compare the universal weak Mennicke symbol of length $d+1$ with the universal Mennicke symbol of length $d+1$ over a smooth algebra of dimension $d$. We also obtain divisibility results for universal Mennicke symbols. We then perform the computation of $MS_{d+1}(A)$ when $A$ is a smooth rational real algebra of dimension $d$ before focusing on stabilization questions. We finally prove that over smooth algebras of odd dimension $d$ over a field, $MS_{d+1}(A)$ is sufficient to understand the stably free modules of rank $d$. 

The proof of the Bass-Kubota theorem for smooth surfaces over an algebraically closed field takes place in Section \ref{BKS}. We start with a quick reminder of basic results on higher Grothendieck-Witt groups, including the definitions. These groups are used to prove that if $S$ is a smooth surface over an algebraically closed field of characteristic different from $2$, then $K_1Sp(S)$ injects into $SK_1(S)$. We then introduce Mennicke symbols of length $2$ before proving the Bass-Kubota theorem.  

\subsection*{Conventions}
The fields considered are of characteristic different from $2$. If $X$ is a scheme and $x_p\in
X^{(p)}$, we denote by $\mathfrak m_p$ the maximal ideal in
$\O_{X,x_p}$ and by $k(x_p)$ its residue field. Finally $\omega_{x_p}$
will denote the $k(x_p)$-vector space $\wedge^p(\mathfrak
m_p/\mathfrak m_p^2)$ (which is one-dimensional if $X$ is regular at
$x_p$).

\subsection*{Acknowledgments}
It is a pleasure to thank Ravi Rao for useful conversations, and also for motivating me to write the results of this paper. Part of this work was achieved during a beautiful tour at Penn State University and Chicago University. Special thanks are due P. Baum, L. Vaserstein, M. P. Murthy and M. V. Nori for making this possible. This work was supported by the Swiss National Science Foundation, grant PAOOP2\_129089.


\section{Symbols and cohomology}\label{symbols}

In this section, we recall the definitions of the Mennicke symbols and the weak Mennicke symbols. We then explore their links to the cohomology groups of some sheaves. The main result is the comparison theorem, which says that for a smooth affine algebra $A$ of dimension $d$ over some field $k$, the universal Mennicke symbol of length $d$ is isomorphic to the $K$-cohomology group $H^d(A,K_{d+1})$.

\subsection{Unimodular rows}\label{umn}
Let $R$ be a ring. A unimodular row of length $n\geq 2$ is a row $a=(a_1,\ldots,a_n)$ with $a_i\in R$ such that there exist $b_1,\ldots,b_n\in R$ such that $\sum a_ib_i=1$. We denote by $Um_n(R)$ the set of unimodular rows of length $n$ and we consider it as a pointed set with base point $e_1:=(1,0,\ldots,0)$. If $M\in GL_n(R)$ and $a\in Um_n(R)$, then $aM$ is also unimodular and thus $Um_n(R)$ is endowed with an action of $GL_n(R)$. Of course, any subgroup of $GL_n(R)$ acts, and one is classically interested in the (pointed) orbit set $Um_n(R)/SL_n(R)$, which classifies up to isomorphism projective modules such that $P\oplus R\simeq R^n$, and $Um_n(R)/E_n(R)$ which classifies unimodular rows up to elementary homotopies and is endowed with an abelian group structure when $n$ is reasonable compared to the Krull dimension $d$ of $R$ (\cite[Theorem 4.1]{vdKallen89}). A way to understand unimodular rows is through symbols, as explained in the next two sections.

\subsection{Mennicke symbols}\label{ms}

A Mennicke symbol of length $n\geq 3$ is a pair $(\phi,G)$ where $G$ is a group and  
 $$\phi:Um_n(R)\to G,$$
is a map such that the two following properties are satisfied:
\begin{enumerate}[ms1.]
\item $\phi(1,0,\ldots,0)=1$ and $\phi(v)=\phi(wM)$ if $M\in E_n(R)$. 
\item $\phi(a,a_2,\ldots,a_n)\cdot \phi(b,a_2,\ldots,a_n)=\phi(ab,a_2,\ldots,a_n)$ for any unimodular rows $(a,a_2,\ldots,a_n)$ and $(b,a_2,\ldots,a_n)$.
\end{enumerate}

It is clear from the definition that a universal Mennicke symbol $(ms,MS_n(R))$ exists. 

\subsection{Weak Mennicke symbols}

A weak Mennicke symbol of length $n\geq 3$ is a pair $(\phi,G)$ where $G$ is a group and
 $$\phi:Um_n(R)\to G,$$
is a map such that the two following properties are satisfied:
\begin{enumerate}[wms1.]
\item $\phi(1,0,\ldots,0)=1$ and $\phi(v)=\phi(wM)$ if $M\in E_n(R)$. 
\item $\phi(a,a_2,\ldots,a_n)\cdot \phi(1-a,a_2,\ldots,a_n)=\phi(a(1-a),a_2,\ldots,a_n)$ for any unimodular row $(a,a_2,\ldots,a_n)$ such that $(1-a,a_2,\ldots,a_n)$ is also unimodular.
\end{enumerate}

The definition of a weak Mennicke symbol, due to W. van der Kallen (\cite[\S1.3]{vdKallen89}), was originally more complicated. However, van der Kallen could show that in most of the situations the original definition is the same as the definition given above (\cite[Theorem 3.3]{vdKallen02}).

Again, it is clear that a universal weak Mennicke symbol $(wms,WMS_n(R))$ exists. Since a Mennicke symbol of length $n$ is obviously a weak Mennicke symbol of length $n$, there is a unique homomorphism $F:WMS_n(R)\to MS_n(R)$ such that the diagram
$$\xymatrix{Um_n(R)\ar[r]^-{wms}\ar[rd]_-{ms} & WMS_n(R)\ar[d]^-f \\ & MS_n(R)}$$
commutes. Observe that $f$ is surjective by definition. 

\subsection{Cohomology}\label{cohomology}

Let $F$ be a field. For any $n\in \N$, we denote by $K_n(F)$ the $n$-th Milnor $K$-theory group of $F$. We also denote by $W(F)$ the Witt ring of $F$, and for any $n\in \Z$ by $I^n(F)$ the $n$-th power of the fundamental ideal in $W(F)$ (with the convention that $I^n(F)=W(F)$ when $n\leq 0$). If $L$ is a $F$-vector space of dimension $1$, then we can also consider the twisted Witt group $W(F,L)$ (classifying vector spaces $V$ endowed with symmetric anisotropic isomorphisms $V\to \homm FVL$). Choosing a generator of $L$ yields an isomorphism $W(F)\to W(F,L)$ under which we can consider the images of $I^n(F)$. The resulting subgroups $I^n(F,L)\subset W(F,L)$ are independent of the choice of the generator of $L$ (\cite[Lemma E.1.2]{Fasel08a}).

By definition, $I^{n+1}(F,L)\subset I^n(F,L)$ for any $n$ and it turns out that the quotient is canonically isomorphic to $I^n(F)/I^{n+1}(F):=\overline I^n(F)$ (\cite[Lemma E.1.3]{Fasel08a}). There is a homomorphism $s_n:K_n(F)\to \overline I^n(F)$ defined on symbols by (\cite[Theorem 4.1]{Milnor69})
$$s_n(\{a_1,\ldots,a_n\})=\langle -1,a_1\rangle\otimes \ldots\otimes\langle -1,a_n\rangle.$$ 
Note that $s_n$ induces an isomorphism $K_n(F)/2K_n(F)\to \overline I^n(F)$ by \cite{Voevodsky03b} and \cite{Orlov07}. We define $G^n(F,L)$ following \cite[\S1]{Barge00} as the fibre product
$$\xymatrix{G^n(F,L)\ar[r]\ar[d] & I^n(F,L)\ar[d] \\
K_n(F)\ar[r]_-{s_n} & \overline I^n(F)}$$
Observe that $G^n(F,L)$ coincides with the (twisted) Milnor-Witt $K$-group $K_n^{MW}(F,L)$ defined in \cite[Definition 5.1]{Morel04} (see \cite[Theorem 5.3]{Morel04}).

Let $X$ be a regular connected scheme over $k$. If $U\subset X$ is an open subset we can consider the kernel of the residue homomorphism (\cite[Lemma 2.1]{Milnor69})
$$\xymatrix{K_n(k(U))\ar[r]^-{d_K} & \bigoplus_{x\in U^{(1)}} K_{n-1}(k(x))}.$$
This defines a presheaf on $X$, whose associated sheaf we denote by $K_n$. For the properties of the cohomology groups of this sheaf, we refer the reader to \cite{Kato86} or \cite{Rost96}. 

Similarly, we can consider for any $U\subset X$ the kernel of the residue homomorphism (\cite[Chapter 7]{Fasel08a} and \cite[Lemma 9.2.3]{Fasel08a})
$$\xymatrix{I^n(k(U))\ar[r]^-{d_I} & \bigoplus_{x\in U^{(1)}} I^{n-1}(k(x),\omega_{x})}.$$
We denote by $I^n$ the sheaf associated to this presheaf and by $\overline I^n$ the quotient $I^n/I^{n+1}$. Since $d_K$ and $d_I$ are compatible (\cite[Theorem 10.2.6]{Fasel08a}), we also get a residue homomorphism on $G^n(k(U))$ and a sheaf $G^n$ on $X$. Observe that by definition there is an exact sequence of sheaves on $X$
$$\xymatrix{0\ar[r] & I^{n+1}\ar[r] & G^n\ar[r] & K_n\ar[r] & 0.}$$
For the properties of the sheaves $I^{n+1}$ and $G^n$, we refer the reader to \cite[\S 9.3, \S 10.4]{Fasel08a} and \cite{Fasel07}.

The homomorphisms $s_n:K_n(F)\to \overline I^n(F)$ induce a homomorphism of sheaves $K_n/2K_n\to \overline I^n$, which is an isomorphism (again by \cite{Voevodsky03b} and \cite{Orlov07}). These two shaves are yet isomorphic to a third one: For any $U\subset X$ and any prime number $l$, consider the kernel of the residue homomorphism
$$\xymatrix{H_{et}^n(k(U),\mu_l^{\otimes n})\ar[r] & \bigoplus_{x\in U^{(1)}} H^{n-1}_{et}(k(x),\mu_l^{\otimes n-1})}$$
defined for instance in \cite{Rost96}. We denote by $\mathcal H^n(l)$ the sheaf associated to this presheaf. There is a natural homomorphism $K_n\to \mathcal H^n(l)$ (\cite[Lemma 6.1]{Milnor69}), which yields a homomorphism $K_n/l\to \mathcal H^n(l)$.  This is an isomorphism by Voevodsky's work \cite{Voevodsky03b} and \cite{Orlov07} when $l=2$, by Merkurjev and Suslin when $n=2$ (\cite{Merkurjev82}), and by Voevodsky-Suslin with Weibel's patch when $l\neq 2$ and $n\geq 3$ (\cite{Weibel09}). One of the features of the sheaf $\mathcal H^n(l)$ is the Bloch-Ogus spectral sequence (\cite{Bloch74}). This spectral sequence is defined at page $2$ by $E_2^{p,q}:=H^p_{Zar}(X,\mathcal H^q(l))$ and converges to $H_{et}^{p+q}(X,\mu_l)$.

\subsection{Comparison theorems}

Let $A$ be a smooth algebra of dimension $d$ over a field $k$. Recall from \cite[Theorem 2.1]{Fasel08b} that $Um_{d+1}(A)/E_{d+1}(A)$ can be identified with morphisms of schemes $\spec A\to \A^{d+1}-\{0\}$ up to naive homotopies. A straightforward computation shows that $H^d(\A^{d+1}-\{0\},G^{d+1})=GW(k)$, where the latter is the Grothendieck-Witt group of $k$ (\cite[\S 3.3]{Fasel08b}). We obtain in this way a map 
$$\phi:Um_{d+1}(A)/E_{d+1}(A)\to H^d(A,G^{d+1})$$
defined by $v\mapsto v^*(\langle 1\rangle)$, where $v:\spec A\to \A^{d+1}-\{0\}$ is a unimodular row and $v^*:H^d(\A^{d+1}-\{0\},G^{d+1})\to H^d(A,G^{d+1})$ is the pull-back homomorphism defined in \cite[Definition 7.1]{Fasel07}. It turns out that $\phi$ is a weak Mennicke symbol and therefore induces a homomorphism (\cite[Theorem 4.1]{Fasel08b})
$$\Phi:WMS_{d+1}(A)\to H^d(A,G^{d+1}).$$

\begin{thm}\label{wms}
Let $A$ be a smooth algebra of dimension $d\geq 3$ over a perfect field $k$. Then the homomorphism
$$\Phi:WMS_{d+1}(A)\to H^d(A,G^{d+1})$$
is an isomorphism.
\end{thm}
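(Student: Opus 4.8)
The plan is to establish that $\Phi$ is bijective by combining the two geometric inputs already recorded: the identification $Um_{d+1}(A)/E_{d+1}(A)\simeq[\spec A,\A^{d+1}-\{0\}]$ of \cite[Theorem 2.1]{Fasel08b} and the computation $H^d(\A^{d+1}-\{0\},G^{d+1})=GW(k)$ of \cite[\S 3.3]{Fasel08b}. Since $n=d+1$ lies in the range where $Um_n(A)/E_n(A)$ carries van der Kallen's abelian group structure (\cite[Theorem 4.1]{vdKallen89}), one knows that $WMS_{d+1}(A)$ is identified with $Um_{d+1}(A)/E_{d+1}(A)$; under this identification $\Phi$ becomes the map $\phi$ constructed above, so it suffices to prove that $\phi$ is a bijection, the essential difficulty being injectivity.

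For surjectivity I would invoke the Gersten resolution of $G^{d+1}$ on $\spec A$, which expresses $H^d(A,G^{d+1})$ as the cokernel of the top differential
$$\bigoplus_{x\in(\spec A)^{(d-1)}}G^2(k(x),\omega_x)\longrightarrow\bigoplus_{x\in(\spec A)^{(d)}}G^1(k(x),\omega_x),$$
so that every class is a finite sum of contributions supported at closed points. Using the smoothness of $A$ together with a moving/transversality argument, each such point-supported class is realized as $v^\ast\langle 1\rangle$ for a unimodular row $v$ cutting out the relevant point, exhibiting it as lying in the image of $\phi$.

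Injectivity is the heart of the matter, and I would attack it by building an inverse on the level of Gersten cycles: send the generator attached to a closed point $x$ to the class of a unimodular row supported at $x$, and then verify that the relations in the Gersten complex are carried to elementarily trivial rows, i.e. that two rows differing by such a relation lie in a common $E_{d+1}(A)$-orbit. This well-definedness is precisely the assertion that the single degree-$d$ class is a complete invariant of the homotopy class of $v$, which is plausible because $\spec A$ is $d$-dimensional and $\A^{d+1}-\{0\}$ is a motivic sphere whose only obstruction to connecting maps out of a $d$-dimensional base lives in $H^d(A,G^{d+1})$. Verifying that this assignment descends through all Gersten relations and is a two-sided inverse to $\phi$ is the main obstacle; it is exactly here that $\dim A=d$ and the perfectness of $k$ (guaranteeing the Gersten resolution and the identifications $\overline I^n\simeq K_n/2K_n$) enter decisively. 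This computation is carried out in \cite[Theorem 4.9]{Fasel08b}, from which the theorem follows.
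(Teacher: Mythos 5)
Your proposal is correct and takes essentially the same route as the paper: the paper's entire proof is the citation of \cite[Theorem 4.9]{Fasel08b}, and your argument is a reasonable outline of that reference's proof (identifying $WMS_{d+1}(A)$ with $Um_{d+1}(A)/E_{d+1}(A)$ via van der Kallen, realizing generators of the Gersten complex supported at closed points by unimodular rows for surjectivity, with injectivity as the delicate step), before deferring to the same citation. Nothing further is needed.
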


\begin{proof}
See \cite[Theorem 4.9]{Fasel08b}.
\end{proof}

\begin{rem}
In \cite{Fasel08b}, we could only prove that $WMS_{d+1}(A)\to H^d(A,G^{d+1})$ was an isomorphism for $d\geq 3$. However, this result is also true for $d=2$ as we will show in further work. 
\end{rem}

We will now prove that the universal Mennicke symbol $MS_{d+1}(A)$ is isomorphic to $H^d(A,K_{d+1})$ under the same hypotheses as in Theorem \ref{wms}. We first define a map 
$$\psi:Um_{d+1}(A)/E_{d+1}(A)\to H^d(A,K_{d+1})$$
as the composition
$$\xymatrix{Um_{d+1}(A)/E_{d+1}(A)\ar[r]^-\phi & H^d(A,G^{d+1})\ar[r] & H^d(A,K_{d+1})}$$
where the right homomorphism is the homomorphism induced by the map of sheaves $G^{d+1}\to K_{d+1}$.

\begin{lem}
The map $\psi$ induces a homomorphism
$$\Psi:MS_{d+1}(A)\to H^d(A,K_{d+1}).$$
\end{lem}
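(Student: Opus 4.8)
The plan is to verify that $\psi$ is itself a Mennicke symbol of length $d+1$; the homomorphism $\Psi$ then exists and is unique by the universal property of $(ms,MS_{d+1}(A))$. Because $\psi$ is obtained from the weak Mennicke symbol $\phi$ by postcomposing with the group homomorphism $H^d(A,G^{d+1})\to H^d(A,K_{d+1})$, it is again a weak Mennicke symbol. In particular property ms1 holds, since it coincides with wms1. The whole content of the lemma is therefore the multiplicative relation ms2: writing the (abelian) target additively and abbreviating $\mathbf a=(a_2,\ldots,a_{d+1})$, one must show
\[
\psi(ab,\mathbf a)=\psi(a,\mathbf a)+\psi(b,\mathbf a)
\]
for all unimodular rows $(a,\mathbf a)$ and $(b,\mathbf a)$ (note that $(ab,\mathbf a)$ is then automatically unimodular).

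To do this I would exploit the exact sequence of sheaves $0\to I^{d+2}\to G^{d+1}\to K_{d+1}\to 0$ recalled above. Since $A$ is smooth of dimension $d$, the Gersten complexes computing these groups have length $d$, so $H^{d+1}(A,I^{d+2})=0$ and the induced map $p\colon H^d(A,G^{d+1})\to H^d(A,K_{d+1})$ is surjective with kernel equal to the image of $H^d(A,I^{d+2})$. As $\psi=p\circ\phi$, the relation ms2 is equivalent to the assertion that the multiplicativity defect
\[
\delta(a,b,\mathbf a):=\phi(ab,\mathbf a)-\phi(a,\mathbf a)-\phi(b,\mathbf a)\in H^d(A,G^{d+1})
\]
lies in the image of $H^d(A,I^{d+2})$, for then $p(\delta(a,b,\mathbf a))=0$.

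It therefore remains to identify this defect, and this is where the real work lies. Under the identification $G^{d+1}=K^{MW}_{d+1}$, the subsheaf $I^{d+2}$ is exactly the kernel of the projection $K^{MW}_{d+1}\to K_{d+1}$, i.e. the image of multiplication by $\eta$; algebraically, multiplicativity of symbols in Milnor--Witt $K$-theory fails only through the correction $[ab]=[a]+[b]+\eta[a][b]$, whose error term $\eta[a][b]$ vanishes in Milnor $K$-theory. I would make this precise for the geometric symbol $\phi(v)=v^*(\langle 1\rangle)$ by computing the first-coordinate defect on the universal source $\A^{d+1}-\{0\}$ and transporting it through the pull-back $v^*$, thereby realizing $\delta(a,b,\mathbf a)$ as the image of an explicit class coming from $H^d(A,I^{d+2})$. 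The delicate point is precisely this translation of the algebraic $\eta$-correction into a genuine cohomology class, compatibly with the weak Mennicke relation satisfied by $\phi$; granting it, $p(\delta)=0$, so property ms2 holds and $\psi$ induces the desired homomorphism $\Psi\colon MS_{d+1}(A)\to H^d(A,K_{d+1})$.
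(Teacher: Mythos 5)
Your reduction to ms2 and your identification of the underlying mechanism --- multiplicativity in $G^{d+1}=K^{MW}_{d+1}$ fails only through $[ab]=[a]+[b]+\eta[a][b]$, and the error term lies in $I^{d+2}$, hence dies in $K_{d+1}$ --- are both exactly right, and this is indeed why the lemma holds (the paper proves it by rerunning the proof of \cite[Theorem 4.1]{Fasel08b} with $K_{d+1}$ in place of $G^{d+1}$). But as written the argument does not close. First, the reformulation ``$\delta(a,b,\mathbf a)$ lies in the image of $H^d(A,I^{d+2})$'' is not a reduction: by the long exact cohomology sequence that image \emph{is} $\ker p$, so your statement is word-for-word equivalent to $p(\delta)=0$, and no progress is made until the defect is actually computed. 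Second, the device you propose for computing it --- a universal computation on $\A^{d+1}-\{0\}$ transported through $v^*$ --- cannot work as stated, because $(ab,\mathbf a)$, $(a,\mathbf a)$ and $(b,\mathbf a)$ are three \emph{different} morphisms to $\A^{d+1}-\{0\}$, so there is no single map along which a ``universal defect'' could be pulled back. Finally, you explicitly defer the remaining step (``granting it''), and that step is the entire content of the lemma.

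What actually has to be done, and what the cited proof does: after elementary operations one may assume $\mathbf a=(v_1,\ldots,v_d)$ is a regular sequence cutting out a finite set of closed points; then $\psi(a,\mathbf a)$ is represented in the Gersten complex for $K_{d+1}$ by a cocycle supported on $V(v_1,\ldots,v_d)$ whose value is the Milnor symbol determined by $a$, twisted by the Koszul orientation $\psi_{v_1,\ldots,v_d}$. Relation ms2 then follows from the linearity of Milnor symbols, $\{ab\}=\{a\}+\{b\}$ --- precisely the linearity that fails for the Milnor--Witt symbols representing $\phi$, which is why $\phi$ is only a \emph{weak} Mennicke symbol. If you carry out this representative-level computation (or cite \cite[proof of Theorem 4.1]{Fasel08b} and check that every step goes through verbatim with $K_{d+1}$, the wms2 identity being upgraded to ms2 by linearity), your argument is complete.
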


\begin{proof}
It suffices to prove that $\psi$ satisfies relation ms2. We can follow \cite[proof of Theorem 4.1]{Fasel08b} with $K_{d+1}$ instead of $G^{d+1}$ to get the result.
\end{proof}

\begin{thm}\label{comparison}
Let $A$ be a smooth algebra of dimension $d\geq 3$ over a perfect field $k$ of characteristic different from $2$. Then $\Psi:MS_{d+1}(R)\to H^d(A,K_{d+1})$ is an isomorphism.
\end{thm}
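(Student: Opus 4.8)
The plan is to exploit the commutative square
$$\xymatrix{WMS_{d+1}(A)\ar[r]^-{\Phi}\ar[d]_-{f} & H^d(A,G^{d+1})\ar[d]^-{g}\\ MS_{d+1}(A)\ar[r]^-{\Psi} & H^d(A,K_{d+1})}$$
where $g$ is induced by the projection of sheaves $G^{d+1}\to K_{d+1}$; by construction of $\psi$ one has $\Psi\circ f=g\circ\Phi$. The short exact sequence $0\to I^{d+2}\to G^{d+1}\to K_{d+1}\to 0$ gives a long exact sequence in cohomology, and since $A$ has dimension $d$ the group $H^{d+1}(A,I^{d+2})$ vanishes. Hence $g$ is surjective with kernel the image of $\alpha\colon H^d(A,I^{d+2})\to H^d(A,G^{d+1})$. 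As $\Phi$ is an isomorphism (Theorem \ref{wms}) and $f$ is surjective, $\Psi\circ f=g\circ\Phi$ is surjective and therefore so is $\Psi$. For injectivity it suffices, since $\Phi$ is an isomorphism and $f$ is surjective, to prove that $\Phi(\ker f)=\mathrm{im}\,\alpha$. One inclusion is automatic: for $x\in\ker f$ one has $g(\Phi(x))=\Psi(f(x))=0$, so $\Phi(\ker f)\subseteq\ker g=\mathrm{im}\,\alpha$.

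The whole difficulty is therefore the reverse inclusion $\mathrm{im}\,\alpha\subseteq\Phi(\ker f)$. First I would identify $\ker f$: because every Mennicke symbol is a weak Mennicke symbol and $MS_{d+1}(A)$ is the universal one, $\ker f$ is exactly the subgroup of $WMS_{d+1}(A)$ generated by the Mennicke defects $wms(ab,v)\,wms(a,v)^{-1}\,wms(b,v)^{-1}$, so that $\Phi(\ker f)$ is generated by the corresponding elements $\phi(ab,v)-\phi(a,v)-\phi(b,v)$ in $H^d(A,G^{d+1})$. Using the Milnor--Witt identity $[ab]=[a]+[b]+\eta[a][b]$, each such defect equals $\eta\,[a][b][v_2]\cdots[v_{d+1}]$ and thus lies in $\mathrm{im}\,\alpha$, which re-proves the easy inclusion and exhibits the defects as the $\eta$-multiples of length $d+2$ symbols coming from unimodular data. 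On the other hand, since $\dim A=d$ the surjection $G^{d+2}\twoheadrightarrow I^{d+2}$ realizing $I^{d+2}$ as the image of multiplication by $\eta$ induces a surjection $H^d(A,G^{d+2})\twoheadrightarrow H^d(A,I^{d+2})$, whence $\mathrm{im}\,\alpha=\eta_*H^d(A,G^{d+2})$. The reverse inclusion is thus reduced to showing that every class in $H^d(A,G^{d+2})$ is a sum of symbols $[a][b][v_2]\cdots[v_{d+1}]$ attached to genuine unimodular rows $(a,v),(b,v),(ab,v)$.

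The hard part will be this last realization statement. I would treat it through the Gersten description of $H^d(A,G^{d+2})$: a class is supported on finitely many closed points, and at a closed point $x$ its value lies in $G^2(k(x),\omega_{x})$, a group generated by symbols in the residue field. The task is then to produce, for each such local symbol, unimodular rows with a common complement $v$ realizing it after applying $\eta$; this is precisely the moving and general position construction underlying the proof of Theorem \ref{wms} in \cite{Fasel08b}, and I expect it to carry over verbatim with $K_{d+1}$ in place of $G^{d+1}$, using smoothness of $A$, perfectness of $k$ and $\carac k\neq 2$ (the latter to invoke the identifications $\overline I^n\simeq K_n/2$ and the Milnor--Witt presentation). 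Once the realization is in place, $\mathrm{im}\,\alpha\subseteq\Phi(\ker f)$ follows and $\Psi$ is an isomorphism. The only genuinely new point compared with \cite[Theorem 4.9]{Fasel08b} is the bookkeeping identifying the Mennicke defects with $\mathrm{im}\,\alpha$; everything else is inherited, which is why the result is an easy consequence of the weak case.
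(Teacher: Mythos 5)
Your proposal is correct and follows essentially the same route as the paper: reduce via Theorem \ref{wms} and the sheaf sequence $0\to I^{d+2}\to G^{d+1}\to K_{d+1}\to 0$ to showing that the image of $H^d(A,I^{d+2})$ in $H^d(A,G^{d+1})$ is covered by $\Phi$ applied to the Mennicke defects. The ``realization'' step you isolate as the hard part is exactly what the paper supplies, by choosing for each maximal ideal a regular sequence $(v_1,\ldots,v_d)$ as in \cite[Corollary 2.4]{Bhatwadekar02} and invoking \cite[\S 4.1]{Fasel08b} to generate $H^d(A,I^{d+2})$ by classes $\langle -1,a\rangle\otimes\langle -1,b\rangle\cdot\psi_{v_1,\ldots,v_d}$ with $(a,v_1,\ldots,v_d)$ and $(b,v_1,\ldots,v_d)$ unimodular, whose images are precisely the defects $wms(ab,v)-wms(a,v)-wms(b,v)$.
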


\begin{proof}
Observe first that the following diagram is commutative
$$\xymatrix{WMS_{d+1}(A)\ar[r]^-\Phi\ar[d]_-f & H^d(A,G^{d+1})\ar[d] \\
MS_{d+1}(A)\ar[r]_-\Psi & H^d(A,K_{d+1}).}$$
Indeed, $H^d(A,K_{d+1})$ being a Mennicke symbol, it is also a weak Mennicke symbol.  There is then a unique homomorphism $WMS_{d+1}(A)\to H^d(A,K_{d+1})$ extending $\psi$. Since both compositions in the diagram do the job, they are equal.

Consider now the following commutative diagram:
$$\xymatrix{0\ar[r] & C\ar[r]\ar@{-->}[d] & WMS_{d+1}(A)\ar[r]^-f\ar[d]_-\Phi & MS_{d+1}(A)\ar[r]\ar[d]^-\Psi & 0\\
0\ar[r] & C^\prime\ar[r] & H^d(A,G^{d+1})\ar[r] & H^d(A,K_{d+1})\ar[r] & 0}$$
where $C$ and $C^\prime$ are the kernels of the horizontal homomorphisms. Since $\Phi$ is an isomorphism by Theorem \ref{wms}, it suffices to prove that the induced homomorphism $C\to C^\prime$ is surjective to conclude. Using the exact sequence of sheaves 
$$\xymatrix{0\ar[r] & I^{d+2}\ar[r] & G^{d+1}\ar[r] & K^M_{d+1}\ar[r] & 0,}$$
we get a surjective homomorphism $H^d(A,I^{d+2})\to C^\prime$.

Let $\mathfrak m$ be a maximal ideal of $A$. Arguing as in \cite[Corollary 2.4]{Bhatwadekar02}, we see that there exists a regular sequence $(v_1,\ldots,v_d)\subset A$ such that 
$$A/(v_1,\ldots,v_d)=A/\mathfrak m\times A/M_1\times \ldots \times A/M_r$$
where the $M_i$ are $\mathfrak m_i$-primary ideals for some distinct maximal ideals $\mathfrak m_1,\ldots, \mathfrak m_r$ (also distinct of $\mathfrak m$). Let 
$$\psi_{v_1,\ldots,v_d}:A/(v_1,\ldots,v_d)\to \exten dA{A/(v_1,\ldots,v_d)}A$$
be the isomorphism defined by $\psi_{v_1,\ldots,v_d}(1)=Kos(v_1,\ldots,v_d)$, where the latter is the Koszul complex associated to the regular sequence $(v_1,\ldots,v_d)$. Using the same argument as in \cite[\S 4.1]{Fasel08b}, we see that $I_{fl}^2(A/\mathfrak m)$ is generated by elements of the form $\langle -1,a\rangle\otimes \langle -1,b\rangle \cdot \psi_{v_1,\ldots,v_d}$ with $(a,v_1,\ldots,v_d)$ and $(b,v_1,\ldots,v_d)$ unimodular. Doing the same for all maximal ideals $\mathfrak m\subset A$, we get a set of generators of $H^d(A,I^{d+2})$. The image in $C^\prime$ of such a generator is of the form
$$(ab,\langle -1,ab\rangle\cdot \psi_{v_1,\ldots,v_d})-(a,\langle -1,a\rangle\cdot \psi_{v_1,\ldots,v_d})-(b,\langle -1,b\rangle\cdot \psi_{v_1,\ldots,v_d}).$$
This is precisely the image of 
$$wms(ab,v_1,\ldots,v_d)-wms(a,v_1,\ldots,v_d)-wms(b,v_1,\ldots,v_d)$$ 
under the homomorphism $\Phi:WMS_{d+1}(A)\to H^d(A,G^{d+1})$. Now by definition the element $wms(ab,v_1,\ldots,v_d)-wms(a,v_1,\ldots,v_d)-wms(b,v_1,\ldots,v_d)$ vanishes in $MS_{d+1}(A)$, whence the result.
\end{proof}

This cohomological description allows to perform some computations of the universal Mennicke symbol of length $d+1$, as we will see in the next section.


\section{Prestabilization and stably free modules}\label{stab}

\subsection{Preliminary computations}
As in the previous section, $A$ is a smooth algebra of dimension $d$ over a perfect field $k$ with $\carac k\neq 2$. 

\begin{thm}\label{cd2}
Let $k$ be a perfect field such that $c.d._2(k)\leq 2$ and let $A$ be a smooth affine algebra dimension $d\geq 3$ over $k$. Then the homomorphism 
$$WMS_{d+1}(A)\to MS_{d+1}(A)$$ 
is an isomorphism. 
\end{thm}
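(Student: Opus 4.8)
The plan is to exploit the commutative square relating $\Phi$ and $\Psi$ together with Theorems \ref{wms} and \ref{comparison}. Writing $\theta\colon H^d(A,G^{d+1})\to H^d(A,K_{d+1})$ for the map induced by the morphism of sheaves $G^{d+1}\to K_{d+1}$, the square says $\Psi\circ f=\theta\circ\Phi$. Since $\Phi$ and $\Psi$ are both isomorphisms, $f=\Psi^{-1}\circ\theta\circ\Phi$, so $f$ is an isomorphism if and only if $\theta$ is. I would therefore reduce the whole statement to proving that $\theta$ is an isomorphism.

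To analyze $\theta$, I would use the short exact sequence of sheaves $0\to I^{d+2}\to G^{d+1}\to K_{d+1}\to 0$ and its long exact cohomology sequence. Because $A$ has Krull dimension $d$, its Zariski cohomological dimension is at most $d$ (the Gersten resolutions have length $d$), so $H^{d+1}(A,I^{d+2})=0$ and $\theta$ is automatically surjective. Its kernel is the image of $H^d(A,I^{d+2})$, so it suffices to prove that $H^d(A,I^{d+2})=0$. This vanishing is the crux of the argument and the main obstacle.

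The first ingredient is a pointwise cohomological dimension count. For $x\in A^{(p)}$ the residue field $k(x)$ is finitely generated over $k$ of transcendence degree $d-p$, so $c.d._2(k(x))\leq (d-p)+2$; combining the Milnor conjecture, which gives $\overline I^n(F)\simeq H^n_{et}(F,\mu_2)$ (\cite{Voevodsky03b}, \cite{Orlov07}), with the Arason-Pfister Hauptsatz then yields $I^m(k(x))=0$ whenever $m\geq c.d._2(k(x))+1$. Inspecting the Gersten complex of $I^q$, whose term in codimension $p$ is $\bigoplus_{x\in A^{(p)}}I^{q-p}(k(x),\omega_{x})$, one checks that every term vanishes as soon as $q\geq d+3$; hence $I^q=0$ as a sheaf for $q\geq d+3$. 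In particular $\overline I^{d+2}=I^{d+2}/I^{d+3}=I^{d+2}$, so $I^{d+2}\simeq\overline I^{d+2}\simeq\mathcal H^{d+2}(2)$.

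It then remains to show $H^d(A,\mathcal H^{d+2}(2))=0$, and here I would run the Bloch-Ogus spectral sequence $E_2^{p,q}=H^p_{Zar}(A,\mathcal H^q(2))\Rightarrow H^{p+q}_{et}(A,\mu_2)$ (\cite{Bloch74}). The term in question is $E_2^{d,d+2}$. Every outgoing differential lands in a column $p>d$, which vanishes since the Zariski cohomological dimension of $A$ is at most $d$; every incoming differential comes from a row $q\geq d+3$, where $\mathcal H^q(2)\simeq\overline I^q=0$ by the previous step. Thus $E_2^{d,d+2}=E_\infty^{d,d+2}$, a subquotient of $H^{2d+2}_{et}(A,\mu_2)$. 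Finally, since $A$ is affine of dimension $d$ over a field with $c.d._2(k)\leq 2$, Artin's vanishing bound for affine varieties over a separably closed field combined with the Hochschild-Serre spectral sequence gives $c.d._2(A)\leq d+2$, so $H^{2d+2}_{et}(A,\mu_2)=0$ for $d\geq 1$. Therefore $H^d(A,I^{d+2})=0$, $\theta$ is an isomorphism, and the theorem follows.
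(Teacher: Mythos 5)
Your proposal is correct and follows essentially the same route as the paper: reduce via Theorems \ref{wms} and \ref{comparison} to showing $H^d(A,G^{d+1})\to H^d(A,K_{d+1})$ is an isomorphism, then kill $H^d(A,I^{d+2})$ by bounding $c.d._2$ of the residue fields, invoking the Milnor conjecture and Arason--Pfister to identify $I^{d+2}$ with $\overline I^{d+2}\simeq\mathcal H^{d+2}(2)$, and finally using the Bloch--Ogus spectral sequence together with the \'etale cohomological dimension bound for affine varieties. The only cosmetic difference is that you make the surjectivity of $\theta$ and the degeneration of the spectral sequence at the relevant spot explicit, which the paper leaves implicit.
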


\begin{proof}
In view of Theorem \ref{comparison}, it suffices to show that the homomorphism 
$$H^d(A,G^{d+1})\to H^d(A,K_{d+1})$$ 
is an isomorphism. Using the exact sequence of sheaves
$$\xymatrix{0\ar[r] & I^{d+2}\ar[r] & G^{d+1}\ar[r] & K^M_{d+1}\ar[r] & 0,}$$
we see that it suffices to prove that $H^d(A,I^{d+2})=0$. Observe that the group $H^d(A,I^{d+2})$ can be computed by using a flasque resolution of $I^{d+2}$, the (filtered) Gersten-Witt complex of $A$ (\cite[Theorem 3.11]{Fasel07})
$$\xymatrix@C=1.0em{I^{d+2}(k(X))\ar[r] & \ldots\ar[r] & \bigoplus_{x_{d-1}\in X^{(d-1)}}I^3(k(x_{d-1}), \omega_{x_{d-1}})\ar[r] & \bigoplus_{x_d\in X^{(d)}}I^2(k(x_d), \omega_{x_d})}$$
where $X=\spec A$.

If $x_i\in X^{(i)}$, then $k(x_i)$ is of cohomological dimension $c.d._2(k(x_i))\leq d+2-i$ by \cite[\S4.2, Proposition 11]{Serre94}. It follows that $H_{et}^{d+3-i}(k(x_i),\mu_2)=0$ and then $I^{d+3-i}(k(x_i))/I^{d+4-i}(k(x_i))=0$ by \cite[Theorem 4.1]{Orlov07} and \cite[Theorem 7.4]{Voevodsky03b}. The Arason-Pfister Hauptsatz (\cite{Arason71}) then shows that $I^{d+3-i}(k(x_i))=0$. Therefore $H^i(A,I^{d+2})=H^i(A,\overline I^{d+2})$ for any $i\in\N$ and $H^i(A,\overline I^j)=0$ for any $i\in\N$ and $j\geq d+3$. Now $H^i(A,\overline I^j)\simeq H^i(A,\mathcal H^j(2))$ (as seen in Section \ref{cohomology}), and inspection of the Bloch-Ogus spectral sequence shows that $H^d(A,\overline I^{d+2})\simeq H_{et}^{2d+2}(A,\mu_2)$.  The latter is trivial since $A$ is affine of dimension $d$ over a field of cohomological dimension at most $2$ (\cite[Chapter VI, Theorem 7.2]{Milne80} and \cite[Chapter III, Theorem 2.20]{Milne80}). 
\end{proof}

Following the arguments of \cite[Proposition 6.1]{Fasel10b}, we can also prove the following theorem.

\begin{thm}\label{unique}
Let $A$ be a smooth algebra of dimension $d\geq 3$ over a perfect field $k$ with $c.d.(k)\leq 1$. Then $MS_{d+1}(A)$ is uniquely divisible prime to the characteristic of $k$.
\end{thm}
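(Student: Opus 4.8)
The strategy is to deduce everything from Theorem~\ref{comparison} and the cohomological machinery used in the proof of Theorem~\ref{cd2}. By Theorem~\ref{comparison} we may replace $MS_{d+1}(A)$ by $H^d(A,K_{d+1})$, so it suffices to prove that, for each prime $\ell\neq\carac k$, multiplication by $\ell$ is bijective on $H^d(A,K_{d+1})$; unique divisibility prime to $\carac k$ then follows. The estimates on which the argument rests are those of Theorem~\ref{cd2}, shifted by one: if $x_i\in X^{(i)}$ with $X=\spec A$, then $k(x_i)$ has transcendence degree $d-i$ over $k$, so $c.d._\ell(k(x_i))\leq d-i+1$ by \cite[\S4.2, Proposition 11]{Serre94}; in particular $c.d._\ell(k(X))\leq d+1$ and $c.d._\ell(A)\leq d+1$. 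Hence the sheaf $\mathcal H^q(\ell)$, and by Bloch-Kato the isomorphic sheaf $K_{d+1}/\ell$ (see Section~\ref{cohomology}), vanishes for $q\geq d+2$, while $H_{et}^i(A,\mu_\ell^{\otimes(d+1)})=0$ for $i\geq d+2$.

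The key input is the pair of vanishing statements $H^{d-1}(A,K_{d+1}/\ell)=0$ and $H^{d}(A,K_{d+1}/\ell)=0$. Using $K_{d+1}/\ell\simeq\mathcal H^{d+1}(\ell)$ and inspecting the Bloch-Ogus spectral sequence (with fixed twist $d+1$) exactly as in the proof of Theorem~\ref{cd2}, the corner terms $E_2^{d-1,d+1}$ and $E_2^{d,d+1}$ carry neither incoming nor outgoing differentials: the incoming ones start from terms involving $\mathcal H^{\geq d+2}(\ell)=0$, and the outgoing ones land in Zariski degree $>d$. They therefore survive to $E_\infty$, where they are subquotients of $H_{et}^{2d}(A,\mu_\ell^{\otimes(d+1)})=0$ and $H_{et}^{2d+1}(A,\mu_\ell^{\otimes(d+1)})=0$ respectively, both of which vanish since $2d>d+1$ for $d\geq 3$. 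This is the step where the hypothesis $c.d.(k)\leq 1$ is really used.

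Divisibility is then formal. Since $\dim X=d$ we have $H^{d+1}(A,-)=0$ by Grothendieck vanishing, so $H^d(A,-)$ is right exact; applying it to the surjection $K_{d+1}\twoheadrightarrow \ell K_{d+1}$ and to $0\to \ell K_{d+1}\to K_{d+1}\to K_{d+1}/\ell\to 0$ yields $H^d(A,K_{d+1})/\ell\simeq H^d(A,K_{d+1}/\ell)=0$, i.e.\ $\ell$ is surjective. For injectivity I would set $T:=K_{d+1}[\ell]$ and $B:=\ell K_{d+1}$ and break multiplication by $\ell$ into the two short exact sequences $0\to B\to K_{d+1}\to K_{d+1}/\ell\to 0$ and $0\to T\to K_{d+1}\xrightarrow{\ell}B\to 0$. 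The two vanishing statements above make $H^d(A,B)\to H^d(A,K_{d+1})$ an isomorphism, while $H^{d+1}(A,T)=0$ gives the exact piece $H^d(A,T)\xrightarrow{j}H^d(A,K_{d+1})\xrightarrow{\ell}H^d(A,B)$. Combining the two, the kernel of multiplication by $\ell$ on $H^d(A,K_{d+1})$ equals $\mathrm{im}(j)$, so injectivity is equivalent to the vanishing of $\mathrm{im}(j)$.

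This last point is the main obstacle, and it is exactly where I expect to follow \cite[Proposition 6.1]{Fasel10b} closely. The torsion subsheaf $T$ is again a homotopy-invariant sheaf, and its contribution to top cohomology is governed by the closed points, where $K_1[\ell](k(x))=\mu_\ell(k(x))$; concretely $H^d(A,T)$ is generated by classes $[\zeta]_x$ attached to a closed point $x$ and an $\ell$-th root of unity $\zeta\in\mu_\ell(k(x))$, and $j([\zeta]_x)$ is the class of $\zeta\in k(x)^\times$ in $H^d(A,K_{d+1})$. One must show that each such class is a boundary in the Gersten complex of $K_{d+1}$. The subtlety is that a general $1$-cycle $[\alpha]_x$ is not a boundary, since such classes generate the nonzero group $H^d(A,K_{d+1})$; the argument has to exploit the special nature of roots of unity together with the fact that $k(x)$ is finite over $k$ with $c.d._\ell(k(x))\leq 1$, realizing $\zeta$ as a tame symbol of a suitable element of $K_2$ on a curve through $x$ whose remaining residues cancel. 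Carrying this out globally is the technical heart of the proof; once it is done, $\mathrm{im}(j)=0$, multiplication by $\ell$ is bijective for every $\ell\neq\carac k$, and $MS_{d+1}(A)\simeq H^d(A,K_{d+1})$ is uniquely divisible prime to $\carac k$.
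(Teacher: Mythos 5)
Your reduction is essentially the paper's: multiplication by $\ell$ is handled through the two short exact sequences of sheaves $0\to{}_{\ell}K_{d+1}\to K_{d+1}\to \ell K_{d+1}\to 0$ and $0\to \ell K_{d+1}\to K_{d+1}\to K_{d+1}/\ell\to 0$, the vanishing of $H^{d-1}(A,K_{d+1}/\ell)$ and $H^{d}(A,K_{d+1}/\ell)$ comes from Bloch--Ogus plus the bound $c.d._\ell(A)\leq d+1$, surjectivity of $\cdot\ell$ is then formal, and injectivity is reduced to showing that the image of $H^d(A,{}_{\ell}K_{d+1})$ in $H^d(A,K_{d+1})$ vanishes. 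All of this matches the paper's argument.

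The problem is the last step, which you explicitly defer (``carrying this out globally is the technical heart of the proof; once it is done\dots''). That step \emph{is} the theorem: everything before it is bookkeeping, and your sketch of it --- generators $[\zeta]_x$ of $H^d(A,{}_{\ell}K_{d+1})$ indexed by closed points and roots of unity, to be realized as tame symbols along curves with cancelling residues --- is not carried out, and it is not even clear that $H^d(A,{}_{\ell}K_{d+1})$ admits such a Gersten-type presentation, since ${}_{\ell}K_{d+1}$ is defined as a kernel of a sheaf map and does not obviously have a flasque resolution by torsion subgroups of the $K$-groups of residue fields. The paper closes the gap differently and completely: assuming first that $k$ contains a primitive $\ell$-th root of unity $\xi$, the map $\{a_1,\dots,a_d\}\mapsto\{\xi,a_1,\dots,a_d\}$ induces, by Suslin's theorem \cite[Theorem 1.8]{Suslin87}, an isomorphism $H^d(A,K_d/\ell)\xrightarrow{\sim} H^d(A,{}_{\ell}K_{d+1})$; under this identification the composite $H^{d-2}(A,K_{d+1}/\ell)\to H^{d-1}(A,\ell K_{d+1})\to H^d(A,{}_{\ell}K_{d+1})$ becomes the Bloch--Ogus differential $d_2\colon E_2^{d-2,d+1}\to E_2^{d,d}$ by \cite[Proposition 7.5]{Barbieri96}, and since $E_3^{d,d}$ injects into $H^{2d}_{et}(A,\mu_\ell)=0$ this differential is surjective, which by exactness kills the image of $H^d(A,{}_{\ell}K_{d+1})$ in $H^d(A,K_{d+1})$. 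The general case is then deduced by a norm argument: passing to a finite separable extension $L/k$ of degree prime to $\ell$ containing $\mu_\ell$, the composite $g_*g^*$ on $H^d(A,{}_{\ell}K_{d+1})$ is multiplication by $[L:k]$, hence surjective, so surjectivity of the relevant map descends. Your proposal is missing both the Suslin/Barbieri-Viale mechanism and the transfer step, so as it stands it does not prove injectivity of multiplication by $\ell$.
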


\begin{proof}
Let $l$ be a prime number with $l\neq \mathrm{char}(k)$. We consider the exact sequences of sheaves (see \cite[proof of Corollary 1.11]{Bloch81} for instance) given by the multiplication by $l$ on sheaves
$$\xymatrix{0\ar[r] & {}_{l}K_{d+1}\ar[r] & K_{d+1}\ar[r] & lK_{d+1}\ar[r] & 0}$$
and
$$\xymatrix{0\ar[r] & lK_{d+1}\ar[r] & K_{d+1}\ar[r] & K_{d+1}/l\ar[r] & 0.}$$
The second sequence yields a long exact sequence in cohomology which ends with
$$\xymatrix@C=1.2em{H^{d-1}(A,K_{d+1}/l)\ar[r] & H^d(A,lK_{d+1})\ar[r] & H^d(A,K_{d+1})\ar[r] & H^d(A,K_{d+1}/l)\ar[r] & 0. }$$
Now the groups $H^i(A,K_{d+1}/l)$ are isomorphic to $H^i(A,\mathcal H^{d+1}(l))$(see Section \ref{cohomology}). As in the above proof, $k(x_i)$ is of cohomological dimension $c.d._2(k(x_i))\leq d+1-i$ by \cite[\S4.2, Proposition 11]{Serre94} when $x_i\in X^{(i)}$. Inspecting then the Bloch-Ogus spectral sequence, we get an epimorphism $H^{2d}_{et}(A,\mu_l)\to H^{d-1}(A,K_{d+1}/l)$ and an isomorphism $H^d(A,K_{d+1}/l)\simeq H^{2d+1}_{et}(A,\mu_l)$. Since $A$ is affine, both \'etale cohomology groups are trivial by \cite[Chapter VI, Theorem 7.2]{Milne80} and \cite[Chapter III, Theorem 2.20]{Milne80}. It follows then that $H^d(A,lK_{d+1})\simeq H^d(A,K_{d+1})$.

We use next the first exact sequence of sheaves. This yields 
$$\xymatrix@C=1.2em{H^{d-1}(A,lK_{d+1})\ar[r] & H^d(A,{}_{l}K_{d+1})\ar[r] & H^d(A,K_{d+1})\ar[r] & H^d(A,lK_{d+1})\ar[r] & 0.}$$
Using our result in the previous paragraph, this sequence becomes
$$\xymatrix@C=1.2em{H^{d-1}(A,lK_{d+1})\ar[r] & H^d(A,{}_{l}K_{d+1})\ar[r] & H^d(A,K_{d+1})\ar[r]^-{\cdot l} & H^d(A,K_{d+1})\ar[r] & 0.}$$
It suffices then to prove that the left homomorphism is surjective to conclude. The second exact sequence of sheaves yields a homomorphism $H^{d-2}(A,K_{d+1}/l)\to H^{d-1}(A,lK_{d+1})$ and we consider the diagram
$$\xymatrix{ H^{d-2}(A,K_{d+1}/l)\ar[d]\ar@{-->}[rd]^-f & \\
H^{d-1}(A,lK_{d+1})\ar[r] & H^d(A,{}_{l}K_{d+1}). }$$
Suppose that $k$ contains a primitive $l$-th root of unity $\xi$. For any field $k\subset F$ and any $n\geq 1$, there is a homomorphism $K_{n-1}(F)/l\to {}_{l}K_n(F)$ defined on symbols by $\{a_1,\ldots,a_{n-1}\}\mapsto \{\xi,a_1,\ldots,a_{n-1}\}$. It is not hard to see that this induces a morphism of sheaves $K_{d}/l\to {}_{l}K_{d+1}$. It follows from \cite[Theorem 1.8]{Suslin87} that this morphism of sheaves induces an isomorphism $H^d(A,K_d/l)\to H^d(A,{}_{l}K_{d+1})$. If we denote by $E_2^{pq}$ the terms at page 2 in the Bloch-Ogus spectral sequence,  the homomorphism $f$ then reads as
$$f:E_2^{d-2,d+1}\to E_2^{d,d}$$
and is precisely the differential at page $2$ by \cite[Proposition 7.5]{Barbieri96}. Now $E_3^{d,d}$ injects in $H^{2d}_{et}(A,\mu_l)$, and we have seen above that the right hand term is trivial. Therefore $f$ is surjective and
$$\cdot l:H^d(A,K_{d+1})\to  H^d(A,K_{d+1})$$
is an isomorphism. 

If $k$ doesn't contain a primitive $l$-th root of unity, it is enough to show that $f:H^{d-2}(A,K_{d+1}/l)\to H^d(A,{}_{l}K_{d+1})$ is still surjective to conclude. There exists a finite separable extension $k\subset L$ of degree prime to $l$ such that $L$ contains a primitive $l$-th root of unity. If we denote by $A_L$ the algebra $A\otimes_k L$, we see that the morphism $g:\spec {A_L}\to \spec A$ is finite and \'etale. A simple computation shows that the composition $g_*g^*:H^d(A,{}_{l}K_{d+1})\to H^d(A,{}_{l}K_{d+1})$ is the multiplication by $[L:k]$ and is therefore surjective. It follows that $g_*$ is surjective and we can conclude from the commutative diagram
$$\xymatrix{H^{d-2}(A_L,K_{d+1}/l)\ar[r]^-f\ar[d]_-{g_*} & H^d(A_L,{}_{l}K_{d+1})\ar[d]^-{g_*}    \\
H^{d-2}(A,K_{d+1}/l)\ar[r]_-f & H^d(A,{}_{l}K_{d+1})}$$
that the lower $f$ is surjective.  
\end{proof}

When the base field is of cohomological dimension greater than $2$, then Theorem \ref{cd2} is no longer true. As a simple illustration of this fact, we consider varieties over $\R$. Recall that a smooth connected variety $X$ of dimension $d$ is rational if $X\times \spec\C$ is birational to $\P^d_{\C}$. We begin with the computation of $MS_{d+1}$.

\begin{thm}\label{real_odd}
Let $A$ be a smooth rational $\R$-algebra of dimension $d$. Then 
$$MS_{d+1}(A)\simeq H^d(X(\R),\Z/2)$$
where $X=\spec A$ and the right hand term denotes the singular cohomology of the real manifold $X(\R)$.
\end{thm}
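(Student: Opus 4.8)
```latex
The plan is to combine the comparison theorem (Theorem \ref{comparison}), which already gives $MS_{d+1}(A)\simeq H^d(A,K_{d+1})$, with an identification of the $K$-cohomology group $H^d(A,K_{d+1})$ in terms of the real points of $X=\spec A$. First I would use the exact sequence of sheaves $0\to I^{d+2}\to G^{d+1}\to K_{d+1}\to 0$ (where $K_{d+1}$ is Milnor $K$-theory) together with Theorem \ref{comparison}; this reduces the computation to understanding $H^d(A,K_{d+1})$ and the error term $H^d(A,I^{d+2})$ and $H^{d+1}(A,I^{d+2})$. The cohomology of the Milnor $K$-sheaf, by the isomorphism $K_{d+1}/2\simeq \overline I^{d+1}\simeq \mathcal H^{d+1}(2)$ recalled in Section \ref{cohomology}, can be related to \'etale cohomology with $\mu_2$-coefficients via the Bloch-Ogus spectral sequence.

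The essential point over $\R$ is that the fields $k(x_i)$ for $x_i\in X^{(i)}$ now have unbounded $2$-cohomological dimension, so the vanishing arguments of Theorem \ref{cd2} fail; instead the higher powers $I^n$ of the fundamental ideal survive and their cohomology is governed by the topology of the real locus. Concretely, I would invoke the comparison between \'etale cohomology of a real variety and the singular cohomology of its real points: for $A$ smooth affine of dimension $d$ over $\R$, the groups $H^{2d+j}_{et}(A,\mu_2)$ stabilize for $j\geq 0$ to $H^d(X(\R),\Z/2)$ by the theorems of Cox and Scheiderer on the \'etale cohomology of real varieties in high degrees. Feeding this into the Bloch-Ogus spectral sequence for $\mathcal H^q(2)$ identifies $H^d(A,\overline I^j)$ in the relevant range with $H^d(X(\R),\Z/2)$, and the rationality hypothesis enters to control the lower-degree \'etale cohomology $H^i_{et}(A,\mu_2)$ so that the spectral sequence degenerates appropriately and the extension terms contribute nothing new.

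More precisely, the strategy is to show that the natural map $H^d(A,I^{d+2})\to H^d(A,\overline I^{d+2})$ is an isomorphism and that $H^d(A,\overline I^{d+2})\simeq H^d(X(\R),\Z/2)$ is an isomorphism which persists up the tower $I^{d+2}\supset I^{d+3}\supset\cdots$, using that the real realization is $2$-periodic in high weights. Combined with the fact that for a rational variety the ``algebraic'' part of the cohomology (the image of $K$-theory, or equivalently the complex \'etale cohomology) vanishes in the top degree, this forces $H^d(A,G^{d+1})$ and hence $H^d(A,K_{d+1})$ to be exactly the topological group $H^d(X(\R),\Z/2)$. The rationality is used to guarantee that $X\times\spec\C$ has no top-degree cohomology beyond the obvious classes, so that the only surviving contribution is the real one.

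The hard part will be twofold: first, making precise and rigorous the identification of $H^d(A,I^{d+2})$ with singular cohomology of $X(\R)$, which requires the real \'etale / real cycle cohomology comparison (in the spirit of Scheiderer) and a careful analysis of how the Bloch-Ogus spectral sequence for $\mathcal H^q(2)$ converges in the unstable range for real varieties; and second, verifying that the rationality hypothesis genuinely kills the complex contribution $H^{2d}_{et}(A_{\C},\mu_2)$ in top degree, so that only the real summand remains. The interplay between the signature / real \'etale cohomology and the filtration by powers of $I$ is where the main subtlety lies, since one must show that passing from $I^{d+2}$ to its quotient $\overline I^{d+2}$ and up the $I$-adic filtration does not alter the top cohomology group.
```
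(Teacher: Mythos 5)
Your overall shape (Theorem \ref{comparison} plus an identification of $H^d(A,K_{d+1})$ with the cohomology of the real points) is the right one, but the argument as proposed has two genuine gaps. First, everything you describe only computes the mod~$2$ reduction $H^d(A,K_{d+1}/2K_{d+1})$; you never supply the mechanism that upgrades this to the integral group $H^d(A,K_{d+1})$. The paper's key step here is the transfer exact sequence
$$\xymatrix{H^d(X\times \spec \C,K_{d+1})\ar[r]^-{f_*} & H^d(X,K_{d+1})\ar[r] & H^d(X,K_{d+1}/2K_{d+1})\ar[r] & 0}$$
associated to the degree~$2$ covering $X\times\spec\C\to X$, combined with the vanishing $H^d(X\times\spec\C,K_{d+1})=0$ for rational complex varieties (\cite[Propositions 5.4 and 5.6]{Fasel08b}). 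This is where rationality actually enters: it kills the $K$-cohomology group $H^d(X_{\C},K_{d+1})$ of the complexification, \emph{not} the \'etale group $H^{2d}_{et}(A_{\C},\mu_2)$ that you invoke, which vanishes for any smooth affine complex variety of dimension $d$ by Artin's affine vanishing theorem and so carries no information. Without the transfer sequence you have no control over the kernel of $H^d(A,K_{d+1})\to H^d(A,K_{d+1}/2K_{d+1})$, i.e.\ no proof that $H^d(A,K_{d+1})$ is killed by $2$.

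Second, your route from $H^d(A,\mathcal H^{d+1}(2))$ to $H^d(X(\R),\Z/2)$ via the Bloch--Ogus spectral sequence converging to $H^{2d+1}_{et}(A,\mu_2)$ does not work as stated over $\R$: the terms $E_2^{p,q}=H^p(A,\mathcal H^q(2))$ with $q>d+1$ do not vanish (this is precisely the failure, which you yourself note, of the $c.d._2\leq 2$ argument of Theorem \ref{cd2}), so $H^{2d+1}_{et}(A,\mu_2)$ receives contributions from $H^{d-1}(A,\mathcal H^{d+2}(2))$, $H^{d-2}(A,\mathcal H^{d+3}(2))$, and so on, and one cannot isolate the single term $E_2^{d,d+1}$ without substantial extra work. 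The paper sidesteps this entirely by quoting Scheiderer's direct sheaf-level identification $H^d(X,\mathcal H^{d+1}(2))\simeq H^d(X(\R),\Z/2)$ (\cite[(19.5.1)]{Scheiderer94}), which rests on the fact that the sheaves $\mathcal H^q(2)$ for $q>\dim X$ are pushed forward from the real spectrum. Finally, the detour through $G^{d+1}$ and $I^{d+2}$ is unnecessary: Theorem \ref{comparison} already lands you in $H^d(A,K_{d+1})$, and no statement about $H^d(A,I^{d+2})$ is needed for this computation.
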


\begin{proof}
First recall from \cite[Proposition 5.4]{Fasel08b} that there is an exact sequence 
$$\xymatrix@C=1.5em{H^d(X\times \spec \C,K_{d+1})\ar[r]^-{f_*} & H^d(X,K_{d+1})\ar[r] & H^d(X,K_{d+1}/2K_{d+1})\ar[r] & 0}$$
where $f_*$ is the push-forward associated to the (finite) morphism $X\times \spec\C\to X$. Since $A$ is rational, $H^d(X\times \spec \C,K_{d+1})=0$ by \cite[Proposition 5.6]{Fasel08b}. Now $H^d(X,K_{d+1}/2K_{d+1})\simeq H^d(X,\mathcal H^{d+1}(2))$ by Section \ref{cohomology} and we get an isomorphism $H^d(X,K_{d+1})\simeq H^d(X,\mathcal H^{d+1}(2))$. Finally, $H^d(X,\mathcal H^{d+1}(2))\simeq H^d(X(\R),\Z/2)$ by \cite[(19.5.1)]{Scheiderer94}.
\end{proof}

In \cite[Theorem 5.7]{Fasel08b}, the group $WMS_{d+1}(A)$ is computed when $A$ has a trivial canonical bundle. We have $WMS_{d+1}(A)\simeq H^d(X(\R),\Z)$. It follows that the homomorphism $WMS_{d+1}(A)\to MS_{d+1}(A)$ is not injective.

\subsection{Prestabilization}

Let $A$ be a smooth algebra of dimension $d$ over a field $k$. The natural homomorphism $SL_{d+1}(A)\to SK_1(A)$ is then surjective, and its kernel is $SL_{d+1}(A)\cap E_{d+2}(A)$ by \cite{Vaserstein69}. A matrix in $SL_{d+1}(A)\cap E_{d+2}(A)$ is called \emph{$1$-stably elementary of size $d+1$}. Our goal in this section is to understand this kernel in a cohomological way when the dimension $d$ of $A$ is odd.

Recall from \cite[Theorem 5.3 (ii)]{vdKallen89} that the map $r:SL_{d+1}(A)\to WMS_{d+1}(A)$ sending a matrix to its first row is a well defined homomorphism. Moreover, when $d$ is odd, we can interpret \cite[Theorem 6.1(ii)]{vdKallen89} by saying that there is an exact sequence
$$\xymatrix{SL_{d+1}(A)\cap E_{d+2}(A)\ar[r]^-r & WMS_{d+1}(A)\ar[r] & MS_{d+1}(A)\ar[r] & 0.}$$
It follows that the kernel $K$ of the map $WMS_{d+1}(A)\to MS_{d+1}(A)$ computes exactly the $1$-stably elementary matrices of size $d+1$ whose first row is not completable in an elementary matrix. When $A$ is of odd dimension $d$ over $\R$ and oriented (i.e. $\wedge^d\Omega_{A/\R}\simeq A$), there are a lot of these matrices.

\begin{thm}
Let $A$ be a smooth affine algebra of odd dimension over $\R$ and let $X=\spec A$. Suppose that $A$ is rational and oriented. Let $\mathcal C$ be the set of compact components of $X(\R)$. Then $K\simeq \oplus_{C\in \mathcal C}\Z$.
\end{thm}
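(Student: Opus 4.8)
The plan is to realise the kernel $K$ as the kernel of a mod $2$ reduction map on the singular cohomology of $X(\R)$, and then read off the answer from the topology of an orientable $d$-manifold. By construction $K$ sits in a short exact sequence
$$\xymatrix{0\ar[r] & K\ar[r] & WMS_{d+1}(A)\ar[r]^-f & MS_{d+1}(A)\ar[r] & 0,}$$
with $f$ surjective. Under the isomorphisms of Theorems \ref{wms} and \ref{comparison}, the map $f$ is identified with the map $H^d(A,G^{d+1})\to H^d(A,K_{d+1})$ induced by the sheaf epimorphism $G^{d+1}\to K_{d+1}$, whose kernel is $I^{d+2}$. So the whole problem is to understand this last map in the present geometric situation.

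First I would bring in the two explicit computations available over $\R$. Since $A$ is rational and oriented, Theorem \ref{real_odd} gives $MS_{d+1}(A)\simeq H^d(X(\R),\Z/2)$, while \cite[Theorem 5.7]{Fasel08b} gives $WMS_{d+1}(A)\simeq H^d(X(\R),\Z)$. Under these identifications I claim $f$ becomes the reduction $\rho\colon H^d(X(\R),\Z)\to H^d(X(\R),\Z/2)$ induced by $\Z\to\Z/2$. The mechanism is the defining fibre-product square for $G^{d+1}$: the composite $G^{d+1}\to K_{d+1}\to\overline I^{d+1}$ equals $G^{d+1}\to I^{d+1}\to\overline I^{d+1}$. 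Passing to real realisations, the $I^{d+1}$-projection is what produces the coefficient group $\Z$ underlying the $WMS$ computation, $\overline I^{d+1}\simeq\mathcal H^{d+1}(2)$ produces $\Z/2$ (this is the Scheiderer identification of Theorem \ref{real_odd}), and $I^{d+1}\to\overline I^{d+1}$ realises to reduction; chasing the square then forces $f$ to be $\rho$. As a sanity check, $\rho$ is surjective because $H^{d+1}(X(\R),\Z)=0$ for the $d$-manifold $X(\R)$, matching the surjectivity of $f$.

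With $f=\rho$ in hand, the computation of $K$ is purely topological. The Bockstein long exact sequence attached to $0\to\Z\xrightarrow{\cdot 2}\Z\to\Z/2\to 0$ identifies $K=\ker\rho$ with $2\,H^d(X(\R),\Z)$. Here the orientation hypothesis enters twice: on the one hand it is what lets \cite[Theorem 5.7]{Fasel08b} apply, and on the other hand $\wedge^d\Omega_{A/\R}\simeq A$ restricts to a trivial determinant line bundle on the $d$-manifold $X(\R)$, so $X(\R)$ is orientable. For an orientable manifold without boundary a connected component contributes $\Z$ to $H^d$ when it is compact and $0$ otherwise, so $H^d(X(\R),\Z)\simeq\bigoplus_{C\in\mathcal C}\Z$. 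Multiplication by $2$ is injective on this free abelian group, whence $K\simeq 2\bigl(\bigoplus_{C\in\mathcal C}\Z\bigr)\simeq\bigoplus_{C\in\mathcal C}\Z$.

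The step I expect to be the main obstacle is the identification of $f$ with the reduction map $\rho$. This is a compatibility statement between three a priori unrelated isomorphisms — the $G^{d+1}$-realisation behind the $WMS$ computation, the $\mathcal H^{d+1}(2)$-realisation behind the $MS$ computation of Theorem \ref{real_odd}, and the coefficient reduction on $X(\R)$ — and making it rigorous requires tracking the fibre-product structure of $G^{d+1}$ through the real-realisation/Scheiderer isomorphisms rather than invoking them as black boxes. By contrast the remaining ingredients, namely the Bockstein computation of $\ker\rho$, the orientability of $X(\R)$, and the top-degree cohomology of an orientable manifold, are standard.
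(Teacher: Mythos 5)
Your proof arrives at the correct answer by essentially the correct mechanism, but it routes through the identification $f=\rho$ on singular cohomology, which you rightly flag as the delicate point; the paper's own proof shows that this identification can be bypassed entirely. The paper stays at the level of sheaf cohomology: from the exact sequence of sheaves $0\to I^{d+2}\to G^{d+1}\to K_{d+1}\to 0$, the kernel $K$ is the image of $H^d(A,I^{d+2})\to H^d(A,G^{d+1})$; composing with the isomorphism $H^d(A,G^{d+1})\simeq H^d(A,I^{d+1})$ of \cite[Theorem 5.7]{Fasel08b} and using that $H^d(A,I^{j})\simeq\oplus_{C\in\mathcal C}\Z$ for all $j\geq d$ with the transition maps $H^d(A,I^{j+1})\to H^d(A,I^{j})$ given by multiplication by $2$ on each compact component (\cite[Proposition 5.1]{Fasel08b}), one sees that $H^d(A,I^{d+2})\to H^d(A,G^{d+1})$ is injective, whence $K\simeq H^d(A,I^{d+2})\simeq\oplus_{C\in\mathcal C}\Z$. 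This multiplication-by-$2$ statement is exactly what your Bockstein argument re-derives topologically. Two remarks on your version. First, the full identification $f=\rho$ is more than you need: once $WMS_{d+1}(A)\simeq\oplus_{C\in\mathcal C}\Z$ and $MS_{d+1}(A)\simeq\oplus_{C\in\mathcal C}\Z/2$ are known (the latter from Theorem \ref{real_odd} together with orientability of $X(\R)$, and $\mathcal C$ is finite for an affine real variety), any surjection from a free abelian group of rank $n$ onto a group of order $2^n$ has kernel of finite index, hence free of rank $n$; since $f$ is surjective by construction, $K\simeq\oplus_{C\in\mathcal C}\Z$ follows with no compatibility check at all. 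Second, if you do want $f=\rho$ on the nose, your fibre-product chase additionally requires that $H^d(A,K_{d+1})\to H^d(A,K_{d+1}/2)$ be an isomorphism --- this is where rationality enters, via the vanishing of $H^d(X\times\spec\C,K_{d+1})$ in the proof of Theorem \ref{real_odd} --- before Scheiderer's identification can be invoked; your sketch uses this implicitly without isolating it.
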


\begin{proof}
First observe that $H^d(A,I^{j})\simeq \oplus_{C\in \mathcal C}\Z$ for any $j\geq d$ (\cite[Proposition 5.1]{Fasel08b}). Moreover, the natural homomorphism $H^d(A,I^{j+1})\to H^d(A,I^{j})$ is multiplication by $2$ on any compact component of $A(\R)$ (and hence is injective). Now the homomorphism $H^d(A,G^j)\to H^d(A,I^j)$ is an isomorphism for $j\geq d$ (\cite[Theorem 5.7]{Fasel08b}). We can now conclude that the sequence 
$$\xymatrix{H^d(A,I^{d+2})\ar[r] & H^d(A,G^{d+1})\ar[r] & H^d(A,K_{d+1})\ar[r] & 0}$$
is also exact on the left. Hence $K\simeq H^d(A,I^{d+2})\simeq \oplus_{C\in \mathcal C}\Z$.
\end{proof}

\begin{exem}
If $A=\R[x_1,\ldots,x_{d+1}]/(\sum x_i^2-1)$ is an algebraic real sphere of dimension $d$, then \cite[proof of Corollary 5.12]{Fasel08b} shows that $WMS_{d+1}(A)$ is generated by $[x_1,\ldots,x_{d+1}]$. If $d=3$, then $(x_1,\ldots,x_4)$ is the first row of the symplectic matrix (see \cite[Theorem 6.11]{Fasel10})
$$\begin{pmatrix} x_1 & x_2 & x_3 & x_4 \\ -x_2 & x_1 & -x_4 & x_3 \\ -x_3 & x_4 & x_1 & -x_2 \\ -x_4 & -x_3 & x_2 & x_1\end{pmatrix}.$$
The first row of the square of this matrix
$$\begin{pmatrix}2x_1^2-1 & 2x_1x_2 & 2x_1x_3 & 2x_1x_4 \\ -2x_1x_2 & 2x_1^2-1 & -2x_1x_4 & 2x_1x_3 \\ -2x_1x_3 & 2x_1x_4 & 2x_1^2-1 & -2x_1x_2 \\ -2x_1x_4 & -2x_1x_3 & 2x_1x_2 & 2x_1^2-1\end{pmatrix}$$
represents $2[x_1,\ldots,x_4]$ in $WMS_4(A)$. Hence its first row is not the first row of an elementary matrix. However
$$\begin{pmatrix}2x_1^2-1 & 2x_1x_2 & 2x_1x_3 & 2x_1x_4 & 0 \\ -2x_1x_2 & 2x_1^2-1 & -2x_1x_4 & 2x_1x_3 & 0 \\ -2x_1x_3 & 2x_1x_4 & 2x_1^2-1 & -2x_1x_2 & 0 \\ -2x_1x_4 & -2x_1x_3 & 2x_1x_2 & 2x_1^2-1 & 0\\ 0 & 0 & 0 & 0 & 1\end{pmatrix}$$
is elementary. 
\end{exem}

In contrast, we have:

\begin{thm}
Let $k$ be a perfect field such that $c.d._2(k)\leq 2$ and let $A$ be a smooth affine algebra of dimension $d\geq 3$ over $k$. Then the first row of any $1$-stably elementary matrix of size $d+1$ is completable in an elementary matrix.
\end{thm}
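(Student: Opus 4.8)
The plan is to reduce the statement to the vanishing of the kernel $K$ of the natural surjection $f\colon WMS_{d+1}(A)\to MS_{d+1}(A)$, which is exactly what Theorem \ref{cd2} furnishes. Write $v$ for the first row of a given $1$-stably elementary matrix $M\in SL_{d+1}(A)\cap E_{d+2}(A)$. The first observation I would record is that $v$ is completable in an elementary matrix if and only if $wms(v)=1$ in $WMS_{d+1}(A)$. Indeed, by van der Kallen's group structure on orbits of unimodular rows (\cite[Theorem 4.1]{vdKallen89}) the canonical surjection $Um_{d+1}(A)/E_{d+1}(A)\to WMS_{d+1}(A)$ is an isomorphism for $d\geq 2$, and under this identification $wms(v)=1$ means precisely that $v$ lies in the $E_{d+1}(A)$-orbit of $e_1$, i.e. that $v$ is the first row of a matrix in $E_{d+1}(A)$. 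So it suffices to prove $wms(v)=1$.

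Next I would locate $wms(v)=r(M)$ inside $K=\ker f$. The key point is that the composite first-row map $f\circ r\colon SL_{d+1}(A)\to MS_{d+1}(A)$ factors through $SK_1(A)$: the strong Mennicke relation ms2 is exactly what makes the first-row symbol stable, so it kills the subgroup of stably elementary matrices. Concretely, by Vaserstein's theorem (\cite{Vaserstein69}) the group $SL_{d+1}(A)\cap E_{d+2}(A)$ is the kernel of $SL_{d+1}(A)\to SK_1(A)$, whence $f(r(M))=1$ and therefore $r(M)=wms(v)\in K$. For $d$ odd this is already contained in the exact sequence quoted above from \cite[Theorem 6.1(ii)]{vdKallen89}, which identifies the image of $r$ on $SL_{d+1}(A)\cap E_{d+2}(A)$ with $K$.

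Finally I would invoke Theorem \ref{cd2}: since $c.d._2(k)\leq 2$, the homomorphism $f\colon WMS_{d+1}(A)\to MS_{d+1}(A)$ is an isomorphism, so $K=0$. Combined with the previous step this forces $wms(v)=1$, and by the first step $v$ is then completable in an elementary matrix, as desired.

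I expect the main obstacle to be the uniform justification, independent of the parity of $d$, that the first row of a stably elementary matrix maps into $K$ — that is, that $f\circ r$ genuinely factors through $SK_1(A)$. The cited exact sequence settles this for $d$ odd; for $d$ even one must argue directly from the definition of the strong Mennicke symbol (its invariance under stabilization) together with Vaserstein's description of $SL_{d+1}(A)\cap E_{d+2}(A)$. This is the only input not already packaged in the isomorphism $WMS_{d+1}(A)\simeq MS_{d+1}(A)$ coming from Theorem \ref{cd2}, and everything else is a formal consequence of that vanishing.
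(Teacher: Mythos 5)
Your argument for $d$ odd is sound and is essentially the paper's: van der Kallen's exact sequence
$SL_{d+1}(A)\cap E_{d+2}(A)\xrightarrow{r} WMS_{d+1}(A)\to MS_{d+1}(A)\to 0$
places $wms(v)$ in the kernel $K$ of $WMS_{d+1}(A)\to MS_{d+1}(A)$, and Theorem \ref{cd2} kills $K$. Your first step (completability of $v$ in an elementary matrix is equivalent to $wms(v)=1$, via the identification of $Um_{d+1}(A)/E_{d+1}(A)$ with $WMS_{d+1}(A)$) is also correct.

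The gap is exactly where you suspect it, and it is not repairable by the route you sketch. For $d$ even you assert that $f\circ r\colon SL_{d+1}(A)\to MS_{d+1}(A)$ factors through $SK_1(A)$, to be justified ``directly from the definition of the strong Mennicke symbol (its invariance under stabilization).'' But the only thing ms1--ms2 give you formally is that the stabilized row $(v,0)$ satisfies $ms_{d+2}(v,0)=1$ in $MS_{d+2}(A)$ --- and that group is trivial anyway, since $Um_{d+2}(A)=e_1E_{d+2}(A)$ by the stable range theorem, so this carries no information whatsoever about $ms_{d+1}(v)\in MS_{d+1}(A)$. Given Theorem \ref{cd2} and your first step, the vanishing of the unstable symbol $ms_{d+1}(v)$ for the first row of a $1$-stably elementary matrix is literally equivalent to the statement being proved, so your reduction is circular in the even case. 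The parity restriction in van der Kallen's Theorem 6.1(ii) is not cosmetic: the exact sequence identifying the image of $r$ on $SL_{d+1}(A)\cap E_{d+2}(A)$ with $K$ is only available for $d$ odd. The paper closes the even case by an entirely different input, namely Vaserstein's prestabilization theorem \cite[\S 6, Theorem 10]{Vaserstein87}, which gives the completability directly when $d$ is even; some such external result is genuinely needed there.
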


\begin{proof}
If $d$ is odd, then it is a straightforward consequence of Theorem \ref{cd2}. If $d$ is even, this is \cite[\S 6, Theorem 10]{Vaserstein87}.
\end{proof}

\begin{rem}
If $c.d._2(k)\leq 1$ and $(d+1)!\in k^\times$, then a stronger result is proved in \cite{Rao94}.
\end{rem}

\subsection{About stably free modules}

Recall from Section \ref{umn} that the pointed set $Um_n(R)/SL_n(R)$ computes the set of isomorphism classes of projective $R$-modules $P$ such that $P\oplus R\simeq R^n$. A consequence of Theorem \ref{comparison} is that we get a cohomological description of $Um_{d+1}(A)/SL_{d+1}(A)\cap E_{d+2}(A)$ when $A$ is a smooth algebra of odd dimension $d$.

\begin{prop}
Let $A$ be a smooth algebra of odd dimension $d$ over a perfect field $k$ with $\carac k\neq 2$. Then $Um_{d+1}(A)/SL_{d+1}(A)\cap E_{d+2}(A)\simeq H^d(A,K_{d+1})$. 
\end{prop}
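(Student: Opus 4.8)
The plan is to realise the orbit set $Um_{d+1}(A)/SL_{d+1}(A)\cap E_{d+2}(A)$ as a quotient of the group $WMS_{d+1}(A)$, and then to identify this quotient with $MS_{d+1}(A)\simeq H^d(A,K_{d+1})$ via Theorem \ref{comparison}. The starting point is van der Kallen's theory: the orbit set $Um_{d+1}(A)/E_{d+1}(A)$ carries the natural abelian group structure of \cite[Theorem 4.1]{vdKallen89}, and with this structure the canonical map $Um_{d+1}(A)/E_{d+1}(A)\to WMS_{d+1}(A)$ is an isomorphism, the projection being the universal weak Mennicke symbol. Since $E_{d+1}(A)$ is normal in $SL_{d+1}(A)$ (as $d+1\geq 3$) and is contained in $SL_{d+1}(A)\cap E_{d+2}(A)$, the right action of $SL_{d+1}(A)$ on $Um_{d+1}(A)$ descends to $WMS_{d+1}(A)$, and the identity on unimodular rows induces a surjection
$$WMS_{d+1}(A)\simeq Um_{d+1}(A)/E_{d+1}(A)\longrightarrow Um_{d+1}(A)/SL_{d+1}(A)\cap E_{d+2}(A)$$
whose fibres are exactly the orbits of the induced action of $SL_{d+1}(A)\cap E_{d+2}(A)$ on $WMS_{d+1}(A)$.

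The next step is to compute this action. For any $v\in Um_{d+1}(A)$ and any $g\in SL_{d+1}(A)$, van der Kallen's results show that the action is by translation through the first-row homomorphism, namely $wms(vg)=wms(v)+r(g)$ in $WMS_{d+1}(A)$, where $r:SL_{d+1}(A)\to WMS_{d+1}(A)$, $g\mapsto wms(e_1g)$, is the homomorphism of \cite[Theorem 5.3 (ii)]{vdKallen89}. Restricting $g$ to $SL_{d+1}(A)\cap E_{d+2}(A)$, the orbit of a class $[v]$ is therefore the coset $[v]+\mathrm{im}(r')$, where $r'$ denotes the restriction of $r$ to $SL_{d+1}(A)\cap E_{d+2}(A)$. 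Consequently the surjection above identifies
$$Um_{d+1}(A)/SL_{d+1}(A)\cap E_{d+2}(A)\simeq WMS_{d+1}(A)/\mathrm{im}(r').$$

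It remains to recognise $\mathrm{im}(r')$. Since $d$ is odd, we have at our disposal the exact sequence
$$SL_{d+1}(A)\cap E_{d+2}(A)\xrightarrow{\,r\,}WMS_{d+1}(A)\longrightarrow MS_{d+1}(A)\longrightarrow 0$$
coming from \cite[Theorem 6.1(ii)]{vdKallen89}, which says exactly that $\mathrm{im}(r')$ is the kernel of $f:WMS_{d+1}(A)\to MS_{d+1}(A)$. Hence the quotient computed above is $WMS_{d+1}(A)/\ker f\simeq MS_{d+1}(A)$, and Theorem \ref{comparison} gives $MS_{d+1}(A)\simeq H^d(A,K_{d+1})$, as desired. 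I expect the delicate point to be the translation formula $wms(vg)=wms(v)+r(g)$: the homomorphism property of $r$ only covers completable rows $v=e_1h$, so to describe the orbits of arbitrary classes one genuinely needs van der Kallen's description of the $SL_{d+1}(A)$-action on $Um_{d+1}(A)/E_{d+1}(A)$, which is where the group-theoretic input of \cite{vdKallen89} is essential. Everything else is a matter of assembling the two cited exact statements.
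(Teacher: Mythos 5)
Your argument is correct and takes essentially the same route as the paper, whose proof is the one-line citation of van der Kallen's Theorem 6.1(ii) --- read, as the paper does just before, as the exact sequence $SL_{d+1}(A)\cap E_{d+2}(A)\xrightarrow{r} WMS_{d+1}(A)\to MS_{d+1}(A)\to 0$ --- combined with Theorem \ref{comparison}. You merely make explicit what ``easy consequence'' means: the identification of the orbit set with the group quotient $WMS_{d+1}(A)/\mathrm{im}(r)$ via the translation formula for the $SL_{d+1}(A)$-action, which is indeed the only point requiring van der Kallen's group-theoretic results.
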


\begin{proof}
This is an easy consequence of \cite[Theorem 6.1(ii)]{vdKallen89}.
\end{proof}

In view of Theorem \ref{real_odd}, we get the following corollary.

\begin{cor}
Let $A$ be a smooth algebra of odd dimension over $\R$. Suppose moreover that $A$ is rational and let $X=\spec A$. Then 
$$Um_{d+1}(A)/SL_{d+1}(A)\cap E_{d+2}(A)\simeq H^d(X(\R),\Z/2).$$  

\end{cor}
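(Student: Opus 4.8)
The plan is to derive the isomorphism by concatenating the two results immediately preceding this corollary, once we have checked that their hypotheses hold simultaneously for $A$. Here the base field is $k=\R$, which is perfect with $\carac k=0\neq 2$, and $A$ is assumed smooth, rational, and of odd dimension $d$. Consequently both the preceding Proposition (requiring smoothness, odd dimension, and a perfect base field of characteristic $\neq 2$) and Theorem \ref{real_odd} (requiring smoothness and rationality over $\R$) apply to $A$ without modification.

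First I would apply the Proposition to identify the orbit set $Um_{d+1}(A)/SL_{d+1}(A)\cap E_{d+2}(A)$ with the $K$-cohomology group $H^d(A,K_{d+1})$. Next I would invoke the comparison theorem (Theorem \ref{comparison}) to rewrite $H^d(A,K_{d+1})$ as the universal Mennicke symbol $MS_{d+1}(A)$; this is legitimate since the characteristic-$\neq 2$ hypothesis needed there is satisfied by $\R$. Finally I would feed in Theorem \ref{real_odd}, which computes $MS_{d+1}(A)\simeq H^d(X(\R),\Z/2)$ for a smooth rational $\R$-algebra. Composing these three isomorphisms gives $Um_{d+1}(A)/SL_{d+1}(A)\cap E_{d+2}(A)\simeq H^d(X(\R),\Z/2)$, which is the assertion.

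Each link in this chain is an isomorphism established earlier, so no genuine difficulty remains; the only thing demanding attention is the verification that all hypotheses line up, which is immediate for $A/\R$. The substantive content has already been spent upstream: the identification of the $1$-stably-elementary orbit set with $MS_{d+1}(A)$ rests on van der Kallen's exact sequence for odd $d$, the passage to $H^d(A,K_{d+1})$ on the comparison theorem, and the final topological identification on the real-realization computation behind Theorem \ref{real_odd}. Here the argument is therefore purely formal.
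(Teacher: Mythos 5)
Your proposal is correct and is exactly the argument the paper intends: the corollary is stated as an immediate consequence of the preceding Proposition ($Um_{d+1}(A)/SL_{d+1}(A)\cap E_{d+2}(A)\simeq H^d(A,K_{d+1})$, via van der Kallen for odd $d$) combined with Theorem \ref{real_odd} (itself resting on Theorem \ref{comparison}), so your chain of isomorphisms reproduces the paper's implicit proof.
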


Using the homomorphism $r:SL_{d+1}(A)\to WMS_{d+1}(A)$ sending a matrix to its first row, we get a description of $Um_{d+1}(A)/SL_{d+1}(A)$ over any perfect field. 

\begin{thm}
Let $A$ be a smooth affine algebra of odd dimension $d$ over a perfect field $k$. Then $Um_{d+1}(A)/SL_{d+1}(A)\simeq H^d(A,K_{d+1})/SL_{d+1}(A)$.
\end{thm}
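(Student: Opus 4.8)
The plan is to derive the description of $Um_{d+1}(A)/SL_{d+1}(A)$ from the already-established cohomological identification $MS_{d+1}(A)\simeq H^d(A,K_{d+1})$ of Theorem \ref{comparison}, together with the exact sequence coming from van der Kallen's \cite[Theorem 6.1(ii)]{vdKallen89} that was recalled in the prestabilization subsection. First I would recall that the first-row map $r:SL_{d+1}(A)\to WMS_{d+1}(A)$ is a well-defined homomorphism, so its image $r(SL_{d+1}(A))$ is a subgroup of $WMS_{d+1}(A)$. The key point is that two unimodular rows in $Um_{d+1}(A)$ are in the same $SL_{d+1}(A)$-orbit precisely when their classes in $WMS_{d+1}(A)$ differ by an element of $r(SL_{d+1}(A))$; this is essentially the content of van der Kallen's theorem in odd dimension, where the orbit set $Um_{d+1}(A)/E_{d+1}(A)$ already carries the group structure identified with $WMS_{d+1}(A)$.

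The second step is to pass from $WMS_{d+1}(A)$ to $MS_{d+1}(A)$ and hence to $H^d(A,K_{d+1})$. Concretely, the surjection $f:WMS_{d+1}(A)\to MS_{d+1}(A)$ has kernel $K=r(SL_{d+1}(A)\cap E_{d+2}(A))$ by the exact sequence
$$\xymatrix{SL_{d+1}(A)\cap E_{d+2}(A)\ar[r]^-r & WMS_{d+1}(A)\ar[r] & MS_{d+1}(A)\ar[r] & 0.}$$
Since $SL_{d+1}(A)\cap E_{d+2}(A)\subset SL_{d+1}(A)$, the image $r(SL_{d+1}(A))$ contains this kernel, so quotienting $WMS_{d+1}(A)$ first by $K$ (to land in $MS_{d+1}(A)\simeq H^d(A,K_{d+1})$) and then by the image of $r(SL_{d+1}(A))$ gives the same result as quotienting directly. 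Thus $Um_{d+1}(A)/SL_{d+1}(A)$ is identified with the quotient of $MS_{d+1}(A)\simeq H^d(A,K_{d+1})$ by the image of $SL_{d+1}(A)$ under the composite $SL_{d+1}(A)\xrightarrow{r} WMS_{d+1}(A)\xrightarrow{f} MS_{d+1}(A)$. Writing this image suggestively as the action of $SL_{d+1}(A)$ on $H^d(A,K_{d+1})$ yields the stated isomorphism $Um_{d+1}(A)/SL_{d+1}(A)\simeq H^d(A,K_{d+1})/SL_{d+1}(A)$.

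The main obstacle I anticipate is bookkeeping the precise sense in which $Um_{d+1}(A)/SL_{d+1}(A)$ is a quotient rather than an orbit set: the right-hand side $H^d(A,K_{d+1})/SL_{d+1}(A)$ should be read as the quotient of the group $H^d(A,K_{d+1})$ by the subgroup $r(SL_{d+1}(A))$ transported along the isomorphism $\Psi$, and one must verify that the $SL_{d+1}(A)$-orbit equivalence on unimodular rows translates exactly into this subgroup relation. This is where the oddness of $d$ is essential, since it is what makes \cite[Theorem 6.1(ii)]{vdKallen89} applicable and guarantees that the relevant orbit set is a group quotient; in even dimension the first-row map and the group structure do not interact as cleanly. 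Once the compatibility of $r$ with the $SL_{d+1}(A)$-action is in hand, the remainder is a formal diagram chase using Theorem \ref{comparison} and the van der Kallen exact sequence, so the proof should be short.
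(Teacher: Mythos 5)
Your proposal is correct and follows essentially the same route as the paper, which leaves the proof implicit: it combines the preceding proposition ($Um_{d+1}(A)/SL_{d+1}(A)\cap E_{d+2}(A)\simeq H^d(A,K_{d+1})$, itself a consequence of van der Kallen's Theorem 6.1(ii)) with the first-row homomorphism $r$ to identify the further quotient by $SL_{d+1}(A)$ with the quotient of $H^d(A,K_{d+1})$ by the image of $r$. Your bookkeeping of the kernel containment $r(SL_{d+1}(A)\cap E_{d+2}(A))\subset r(SL_{d+1}(A))$ and of the translation interpretation of the $SL_{d+1}(A)$-action matches what the paper's subsequent remark makes explicit via $r:SK_1(A)\to H^d(A,K_{d+1})$.
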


\begin{rem}
This result is wrong when the dimension $d$ is even. For example, when $A$ is the real algebraic sphere of dimension $2$, then $Um_3(A)/SL_3(A)=\Z$ by \cite[Corollary 5.12]{Fasel08b}. On the other hand, $H^2(X,K_3)=\Z/2$. 
\end{rem}

\begin{rem}
Under the assumptions of the theorem, we have the stabilization isomorphism $SL_{d+1}(A)/SL_{d+1}(A)\cap E_{d+2}(A)\simeq SK_1(A)$. The first row homomorphism then yields a homomorphism 
$$r:SK_1(A)\to H^d(A,K_{d+1})$$
whose cokernel is precisely $Um_{d+1}(A)/SL_{d+1}(A)$.
\end{rem}


\section{The Bass-Kubota theorem for surfaces}\label{BKS}

\subsection{Grothendieck-Witt groups}

In this section, we briefly recall some basic facts about Grothendieck-Witt groups. The reader is referred to \cite[\S 8]{Schlichting10} for more informations. If $X$ is a scheme over $k$ and $L$ is a line bundle over $X$, then there are abelian groups $GW_i^j(X,L)$ for any $i\in\Z$ and $j\in \Z/4$ generalizing the classical Grothendieck-Witt group $GW(X)$. If $X=\spec R$, then $GW_i^0(R)=K_iO(R)$ while $GW_i^2(R)=K_iSp(R)$. The other groups correspond with Karoubi's $U$ and $V$ groups (\cite[\S 3.6]{Schlichting08pre}). To compare Quillen $K$-theory and higher Grothendieck-Witt groups, there are two functors
$$f:GW_i^j(X,L)\to K_i(X)$$
and 
$$H:K_i(X)\to GW_i^j(X,L)$$
respectively called \emph{forgetful} functor and \emph{hyperbolic} functor. In this setting, Karoubi periodicity reads as a long exact sequence
$$\xymatrix@C=1.2em{\ldots\ar[r] & GW_i^j(X,L)\ar[r]^-f & K_i(X)\ar[r]^-H & GW_i^{j+1}(X,L)\ar[r]^-\eta & GW_{i-1}^j(X,L)\ar[r]^-f & K_{i-1}(X)\ar[r] & \ldots}$$
A basic tool to study Grothendieck-Witt groups is the Gersten-Grothendieck-Witt spectral sequence (\cite[Theorem 25]{Fasel09c} or \cite[Theorem 1.7]{Hornbostel08}). Namely, for any regular scheme $X$ and any $n\in \Z$ there exists a spectral sequence $E(n)^{pq}$ converging to $GW^n_{n-p-q}(X)$ whose terms at page $1$ are 
$$E(n)^{pq}_1:=\displaystyle{\bigoplus_{x_p\in X^{(p)}} GW^{n-p}_{n-p-q}(k(x_p),\omega_{x_p}).}$$
By construction, the hyperbolic and forgetful functors induce morphism of spectral sequences $H:E^{pq}\to E(n)^{pq}$ and $f:E(n)^{pq}\to E^{pq}$, where $E^{pq}$ is the Brown-Gersten-Quillen spectral sequence in $K$-theory (\cite[Theorem 5.20]{Srinivas96}).

For our purpose we will need only the following two results, the first one being an easy lemma.

\begin{lem}\label{pullback}
Let $F$ be a field. For $0\leq n\leq 2$, the image of the homomorphism $\eta:GW_n^n(F)\to GW_{n-1}^{n-1}(F)$ is equal to $I^n(F)$ and the following diagram  
$$\xymatrix{GW_n^n(F)\ar[r]^-\eta\ar[d]_-f & I^n(F)\ar[d]\\
K_n(F)\ar[r]_-{s_n} & \overline I^n(F).}$$
where $f:GW_n^n(F)\to K_n(F)$ is the forgetful homomorphism is a fibre product.
\end{lem}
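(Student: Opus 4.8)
The target group $G^n(F)=K^{MW}_n(F)$ is by definition the fibre product of $s_n\colon K_n(F)\to\overline{I}^n(F)$ and the projection $I^n(F)\to\overline{I}^n(F)$, so the lemma amounts to exhibiting the pair $(f,\eta)$ as realizing $GW_n^n(F)$ as exactly this fibre product. The plan is to run everything through Karoubi periodicity, whose relevant segment (with $i=n$, $j=n-1$) reads
$$GW_n^{n-1}(F)\xrightarrow{\;f\;}K_n(F)\xrightarrow{\;H\;}GW_n^n(F)\xrightarrow{\;\eta\;}GW_{n-1}^{n-1}(F)\xrightarrow{\;f\;}K_{n-1}(F).$$
Exactness gives at once $\mathrm{im}(\eta)=\ker\big(f\colon GW_{n-1}^{n-1}(F)\to K_{n-1}(F)\big)$ and $\ker(\eta)=\mathrm{im}(H)$. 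Thus the first assertion of the lemma becomes the computation $\ker\big(f\colon GW_{n-1}^{n-1}(F)\to K_{n-1}(F)\big)=I^n(F)$, where $I^n(F)$ is viewed inside $GW_{n-1}^{n-1}(F)$ through the standard identification of the kernel of the forgetful map with a power of the fundamental ideal; for $n=0$ this is nothing but $\eta\colon GW(F)\twoheadrightarrow W(F)=I^0(F)$, and for $n=1$ it is $\ker(\mathrm{rank}\colon GW(F)\to\Z)\cong I(F)$.

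The second ingredient I would extract is the behaviour of $f$ and $H$ on the diagonal. One has the robust relation $f\circ H=1+\sigma$, where $\sigma$ is the duality involution on $K_n(F)$; the sign $(-1)^n$ coming from Milnor $K$-theory cancels the weight shift by $n$, so that for the diagonal groups $f\circ H$ is multiplication by $2$ uniformly. In particular $\ker(H)\subseteq K_n(F)[2]$ for free, and the true content is the reverse inclusion, namely that $H$ annihilates the $2$-torsion of $K_n(F)$ for $n\le 2$. Granting this together with $\mathrm{im}(\eta)=I^n(F)$, exactness produces a short exact sequence
$$0\to K_n(F)/\mathrm{im}(f)\to GW_n^n(F)\xrightarrow{\;\eta\;}I^n(F)\to 0,$$
in which, via $f\circ H=\cdot\,2$, the left-hand term is identified with $2K_n(F)=\ker(s_n)$.

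With these in hand the pullback claim is a short-five-lemma comparison. First I would check that the square commutes, i.e. that $\overline{\eta(x)}=s_n(f(x))$ in $\overline{I}^n(F)$ for every $x\in GW_n^n(F)$; this is the point that guarantees $(f,\eta)$ lands in $G^n(F)$, and it is verified on generators, where both sides are governed by the Pfister form $\langle -1,a_1\rangle\otimes\cdots\otimes\langle -1,a_n\rangle$ defining $s_n$. The map $(f,\eta)$ then fits into a morphism from the displayed sequence to the tautological sequence $0\to 2K_n(F)\to G^n(F)\to I^n(F)\to 0$; it is the identity on $I^n(F)$ and, by $f\circ H=\cdot\,2$, the isomorphism $K_n(F)/\mathrm{im}(f)\cong 2K_n(F)$ on the left, whence an isomorphism in the middle.

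The main obstacle is the pair of explicit computations feeding the first two steps: identifying $\ker\big(f\colon GW_{n-1}^{n-1}(F)\to K_{n-1}(F)\big)$ with $I^n(F)$, and showing that $H$ kills $K_n(F)[2]$ (equivalently $\ker H=K_n(F)[2]$). For $n=0$ both are classical facts about $GW(F)\to\Z$ and $GW(F)\to W(F)$; for $n=1,2$ I would reduce them to the known low-degree computations of the Grothendieck--Witt groups of a field together with the structure of the Witt ring and its fundamental filtration, which is precisely where the restriction $0\le n\le 2$ is used. The sign bookkeeping making $f\circ H=\cdot\,2$ on the diagonal, and the generator-level verification of commutativity, are the other places requiring care, but these become routine once the conventions of \cite{Schlichting10} are fixed.
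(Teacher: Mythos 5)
Your argument is structurally sound and, granting the two computations you flag, does establish the lemma; but be aware that the paper's own proof is nothing more than a pair of citations (\cite[Corollary 4.5.1.5]{Barge08} for $GW_1^1$, \cite[Corollary 6.4]{Suslin87} for $GW_2^2$, the case $n=0$ being the classical presentation of $GW(F)$ as $\Z\times_{\Z/2}W(F)$), so the two routes differ mainly in where the hard content is placed. Your Karoubi-periodicity scaffolding is correct: exactness of
$$GW_n^{n-1}(F)\xrightarrow{f} K_n(F)\xrightarrow{H} GW_n^n(F)\xrightarrow{\eta} GW_{n-1}^{n-1}(F)\xrightarrow{f} K_{n-1}(F)$$
together with $f\circ H=2$ on the diagonal reduces everything, via the five lemma against $0\to 2K_n(F)\to G^n(F)\to I^n(F)\to 0$, to (i) $\ker\bigl(f:GW_{n-1}^{n-1}(F)\to K_{n-1}(F)\bigr)\simeq I^n(F)$, (ii) $\ker\bigl(H:K_n(F)\to GW_n^n(F)\bigr)={}_{2}K_n(F)$, and (iii) commutativity of the square; you also quietly use $\ker(s_n)=2K_n(F)$, i.e.\ the Milnor conjecture, to identify the kernel of $G^n(F)\to I^n(F)$, which is legitimate since the paper invokes it elsewhere. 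The one point to be honest about is that (i) and (ii) are not ``routine once conventions are fixed'': for $n=2$, (i) is essentially the Barge--Lannes computation of $GW_1^1(F)$, and (ii) is Suslin's theorem that ${}_2K_2(F)=\{-1\}\cdot K_1(F)$ coincides with the image of the forgetful map out of $GW_2^1(F)$ --- which is to say, exactly the results the paper cites. So your proof is a valid reorganization rather than a more elementary argument; what it buys is a clear isolation of precisely which low-degree facts are needed and a demonstration that the fibre-product structure is forced by periodicity, which is a useful clarification the paper's one-line proof omits.
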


\begin{proof}
For $n=0$, this is obvious. For $GW_1^1$, see \cite[Corollary 4.5.1.5]{Barge08} and see \cite[Corollary 6.4]{Suslin87} for $GW^2_2$. 
\end{proof}

\begin{prop}\label{forget}
Let $S$ be a smooth affine surface over an algebraically closed field. Then the forgetful homomorphism $f:K_1Sp(S)\to K_1(S)$ is split injective.
\end{prop}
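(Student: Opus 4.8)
The plan is to use the identification $K_1Sp(S)=GW_1^2(S)$ and to read off the forgetful map from a comparison of the Gersten--Grothendieck--Witt spectral sequence with the Brown--Gersten--Quillen one. First I would invoke Karoubi periodicity around the map in question, namely the exact sequence
\[
GW_2^3(S)\xrightarrow{\ \eta\ }GW_1^2(S)\xrightarrow{\ f\ }K_1(S)\xrightarrow{\ H\ }GW_1^3(S).
\]
Exactness shows $\ker f=\operatorname{im}(\eta)$, so injectivity of $f$ is equivalent to the vanishing of $\eta\colon GW_2^3(S)\to GW_1^2(S)$, i.e.\ to the surjectivity of $H\colon K_2(S)\to GW_2^3(S)$. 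The reason this should hold over an algebraically closed field is that the discrepancy between the hyperbolic and forgetful functors is measured by the fundamental-ideal sheaves $I^n$, and these are severely constrained by cohomological dimension.

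To make this precise I would compare, column by column, the two spectral sequences in the total degree computing $GW_1^2(S)\to K_1(S)$. With $n=2$ the relevant $E_1$-terms sit in codimensions $p=0,1,2$, where the forgetful map reads
\[
GW^2_1(k(S))\to K_1(k(S)),\qquad \textstyle\bigoplus_{x_1}GW^1_1(k(x_1),\omega_{x_1})\to\bigoplus_{x_1}K_1(k(x_1)),\qquad \bigoplus_{x_2}GW^0_1(\bar k)\to\bigoplus_{x_2}\bar k^{\times}.
\]
Lemma \ref{pullback} identifies the middle (weight-one) map with the projection of the fibre product $K_1(k(x_1))\times_{\overline I^1(k(x_1))}I^1(k(x_1))$ onto its first factor. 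Since $k(x_1)$ is the function field of a curve over $\bar k$ it has $c.d._2\le1$, hence $I^2(k(x_1))=0$ by the Arason--Pfister Hauptsatz, so $I^1\to\overline I^1$ is an isomorphism and the forgetful map is an isomorphism on this column. The same cohomological-dimension input ($c.d._2 k(S)\le2$, $c.d._2 k(x_1)\le1$) forces the higher $I$-sheaves entering $GW_2^3$ to vanish in the range that could contribute to $\operatorname{im}(\eta)$, which is what should yield injectivity.

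The genuinely delicate point, and the place where I expect the real work to lie, is the closed-point column and the splitting. There the forgetful map is $K_1O(\bar k)\to\bar k^{\times}$: it is injective (the class of a reflection maps to $-1$), but it admits no retraction at the level of $E_1$, because $\bar k^{\times}$ is divisible and hence admits no nonzero homomorphism to the torsion group $K_1O(\bar k)$. To obtain a split injection one must therefore show that these closed-point contributions are compatibly annihilated by the $d_2$-differentials in both spectral sequences --- equivalently, that after passing to $E_\infty$ only the weight-one and function-field graded pieces survive --- and then use that $f$ is an isomorphism (respectively a controlled split injection) on those pieces to assemble a retraction of $f$ on $GW_1^2(S)$. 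Constructing this retraction, rather than merely proving injectivity, is the main obstacle; the hyperbolic functor $H\colon K_1(S)\to GW_1^2(S)$ supplies the natural candidate on the surviving pieces, and the task is to check that the associated extension of filtration quotients splits, using once more that the base field is algebraically closed.
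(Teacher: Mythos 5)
Your overall strategy --- compare the Gersten--Grothendieck--Witt spectral sequence with the Brown--Gersten--Quillen one via the forgetful functor, and use cohomological dimension to kill the relevant powers of the fundamental ideal --- is the paper's strategy, and your treatment of the codimension-one terms via Lemma \ref{pullback} and the vanishing of $I^2(k(x_1))$ is exactly right. But there are two genuine gaps. First, the terms you list form the anti-diagonal $E_1^{p,q}$ with $p+q=1$, whereas $GW_1^2(S)$ is computed from the rows of the spectral sequence: the contribution you must kill is $E(2)_2^{2,-1}$, the cokernel of $\bigoplus_{x_1}GW_2^1(k(x_1))\to\bigoplus_{x_2}GW_1^0(k(x_2))$ sitting on the line $q=-1$. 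The paper identifies this cokernel with $CH^2(S)/2$ (using that $f:GW_i^{i-1}(F)\to \{2\}K_i(F)$ is surjective for $i=1,2$ and an isomorphism for $i=1$ over $\bar k$), and then invokes Colliot-Th\'el\`ene's divisibility theorem for zero-cycles on smooth affine surfaces over an algebraically closed field to conclude that it vanishes. This arithmetic-geometric input is the crux of the proof and is absent from your proposal; your suggested substitute --- that the closed-point contributions are ``compatibly annihilated by the $d_2$-differentials'' --- is unsupported, since the differential into $E_2^{2,-1}$ originates at $E_2^{0,0}$ and there is no reason offered for its surjectivity.

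Second, you explicitly leave the splitting open, but it is less delicate than you fear and requires no analysis of extensions of filtration quotients. Once $E(2)_2^{2,-1}=0$ and the line $q=1$ is disposed of, the spectral sequence gives $K_1Sp(S)\simeq E(2)_2^{1,0}$, the middle homology of $GW_2^2(k(S))\to\bigoplus GW_1^1(k(x_1))\to\bigoplus GW(k(x_2))$; Lemma \ref{pullback} together with $I^3(k(S))=I^2(k(x_1))=I^1(k(x_2))=0$ makes $f$ an isomorphism from this complex onto the Gersten complex for $K$-theory, whence $f$ identifies $K_1Sp(S)$ with $H^1(S,K_2)$. In the Brown--Gersten--Quillen spectral sequence $H^1(S,K_2)=E_\infty^{1,-2}$ is the top graded quotient $F^1/F^2$ of the filtration on $SK_1(S)$, so the fact that the composite $K_1Sp(S)\to F^1K_1(S)\to F^1/F^2$ is an isomorphism already furnishes the retraction, precomposed with the projection $K_1(S)=\mathcal{O}(S)^\times\oplus SK_1(S)\to SK_1(S)$. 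Finally, your opening reduction via Karoubi periodicity to the surjectivity of $H:K_2(S)\to GW_2^3(S)$ is correct but is a detour you never carry out, and in any case it addresses only injectivity, not splitness.
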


\begin{proof}
We may assume that $S$ is connected. Recall first  that $K_1Sp(S)=GW_1^2(S)$. We use the Gersten-Grothendieck-Witt spectral sequence $E(2)$ to compute this group. Since $S$ is a surface, the information is concentrated on the lines $q=-1,0,1$. The line $q=1$ is trivial by \cite[Lemma 2.2]{Fasel09c}, while the line $q=-1$ is as follows:
$$\xymatrix{GW_3^2(k(x_0))\ar[r] & \displaystyle{\bigoplus_{x_1\in X^{(1)}} GW_2^1(k(x_1))}\ar[r] &  \displaystyle{\bigoplus_{x_2\in X^{(2)}} GW_1^0(k(x_2))}            }$$
The forgetful functor $f$ induces a commutative diagram 
$$\xymatrix{ GW_3^2(k(x_0))\ar[r]\ar[d]_-f & \displaystyle{\bigoplus_{x_1\in X^{(1)}} GW_2^1(k(x_1))}\ar[r] \ar[d]_-f&  \displaystyle{\bigoplus_{x_2\in X^{(2)}} GW_1^0(k(x_2))}\ar[d]_-f\\
K_3(k(x_0))\ar[r] & \displaystyle{\bigoplus_{x_1\in X^{(1)}} K_2(k(x_1))}\ar[r] &  \displaystyle{\bigoplus_{x_2\in X^{(2)}} K_1(k(x_2))} }$$
By \cite[Lemma 2.4]{Fasel09c}, the forgetful functor $f:GW_i^{i-1}(F)\to K_i(F)$ factors through the $2$-torsion and induces a surjective map $f:GW_i^{i-1}(F)\to \{2\}K_i(F)$ for any $i=1,2$ and any field $F$. Moreover, if $F$ is algebraically closed, then the map $f:GW_1^0(F)\to \{2\}K_1(F)$ is an isomorphism. It follows that the cokernel of 
$$\xymatrix{\displaystyle{\bigoplus_{x_1\in X^{(1)}} GW_2^1(k(x_1))}\ar[r] &  \displaystyle{\bigoplus_{x_2\in X^{(2)}} GW_1^0(k(x_2))}            }$$
is isomorphic to $CH^2(S)/2$, which is trivial by \cite[Lemma 1.2]{Colliot96}. This proves that the spectral sequence induces an isomorphism $K_1Sp(S)\simeq E(2)_2^{1,0}$. Explicitly, $K_1Sp(S)$ is isomorphic to the homology of the complex
$$\xymatrix{GW_2^2(k(x_0))\ar[r] & \displaystyle{\bigoplus_{x_1\in X^{(1)}} GW_1^1(k(x_1))}\ar[r] &  \displaystyle{\bigoplus_{x_2\in X^{(2)}} GW(k(x_2))} .           }$$
Arguing as in the proof of Theorem \ref{cd2}, we see that $I^{d+1-j}(k(x_j))=0$ for any $x\in X^{(j)}$. Lemma \ref{pullback} then shows that the forgetful functor induces an isomorphism of complexes
$$\xymatrix{ GW_2^2(k(x_0))\ar[r]\ar[d]_-f & \displaystyle{\bigoplus_{x_1\in X^{(1)}} GW_1^1(k(x_1))}\ar[r] \ar[d]_-f&  \displaystyle{\bigoplus_{x_2\in X^{(2)}} GW_0^0(k(x_2))}\ar[d]_-f\\
K_2(k(x_0))\ar[r] & \displaystyle{\bigoplus_{x_1\in X^{(1)}} K_1(k(x_1))}\ar[r] &  \displaystyle{\bigoplus_{x_2\in X^{(2)}} K_0(k(x_2))}. }$$
Therefore the forgetful functor $f:K_1Sp(S)\to K_1(S)$ induces an isomorphism $K_1Sp(S)\simeq H^1(S,K_2)$. Using now the Brown-Gersten-Quillen spectral sequence $E^{p,q}$ converging to the $K$-theory of $S$, we see that $E_{\infty}^{1,-2}=H^1(S,K_2)$ and that  $f$ induces in fact an isomorphism $K_1Sp(S)\to E_{\infty}^{1,-2}$. It follows that $f$ is split injective.
\end{proof}

We now have all the tools in hand to prove the analogue of the Bass-Kubota theorem for surfaces over an algebraically closed field. This is the object of the next section.
   
\subsection{Mennicke symbols of length $2$}

The definition of the Mennicke symbols of length $2$ has to take into account the fact that $E_2(R)$ is not normal in $SL_2(R)$.

A Mennicke symbol of length $2$ is a map $ms:Um_2(R)\to G$, where $G$ is a group, such that:
\begin{enumerate}[ms1.]
\item $ms(1,0)=1$ and $ms(v)=ms(wM)$ if $M\in SL_2(R)\cap E(R)$. 
\item $ms(a,c)\cdot ms(b,c)=ms(ab,c)$ for any unimodular rows $(a,c)$ and $(b,c)$.
\end{enumerate}
We write $MS_2(R)$ for the universal Mennicke symbol.  It follows from \cite[\S 4]{Suslin76} that the map
$$\Phi:Um_2(R)\to SK_1(R)$$
defined by $\Phi(a,b)=\begin{pmatrix} a & b \\ f & e\end{pmatrix}$ where $e,f$ are any elements in $R$ such that $ae-bf=1$ is a Mennicke symbol of length $2$.

A symplectic Mennicke symbol is a map $msp:Um_2(R)\to G$, where $G$ is a group, such that:
\begin{enumerate}[msp 1.]
\item $msp(1,0)=1$ and $msp(v)=msp(wM)$ if $M\in SL_2(R)\cap ESp(R)$. 
\item $msp(a,b)\cdot msp(a,c^2)=msp(a,bc^2)$ for any $(a,b),(a,c^2)\in Um_2(R)$.
\end{enumerate}
We denote by $MSp_2(R)$ the universal symplectic Mennicke symbol. By \cite[\S4]{Suslin76} again, the map $\Phi$ above factorizes through $K_1Sp(R)$, yielding a map
$$\Psi:Um_2(R)\to K_1Sp(R)$$
which is a symplectic Mennicke symbol. If $R$ is of dimension $2$, then $Um_4(R)=e_1E_4(R)=e_1ESp_4(R)$ by \cite[\S4]{Suslin76}. It follows that $\Psi$ induces an isomorphism $MSp_2(R)\simeq K_1Sp(R)$. 

\begin{lem}\label{inter}
Let $S$ be a smooth affine surface over an algebraically closed field of characteristic different from $2,3$. Then 
$$SL_2(S)\cap E(S)=SL_2(S)\cap E_3(S)=SL_2(S)\cap ESp_4(S)=SL_2(S)\cap ESp(S).$$
\end{lem}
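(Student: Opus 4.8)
The plan is to prove the chain of equalities
$$SL_2(S)\cap E(S)=SL_2(S)\cap E_3(S)=SL_2(S)\cap ESp_4(S)=SL_2(S)\cap ESp(S)$$
by establishing inclusions both ways among these four subgroups. The two outer equalities are the soft part: since $E_3(S)\subseteq E(S)$ and $ESp_4(S)\subseteq ESp(S)$, we automatically have
$$SL_2(S)\cap E_3(S)\subseteq SL_2(S)\cap E(S)\quad\text{and}\quad SL_2(S)\cap ESp_4(S)\subseteq SL_2(S)\cap ESp(S),$$
so it is enough to prove the reverse inclusions together with the single middle equality. For the reverse inclusions I would invoke stabilization: because $S$ has dimension $2$, a matrix in $SL_2$ that becomes elementary (resp.\ elementary symplectic) after stabilizing to large size is already elementary (resp.\ elementary symplectic) after stabilizing to size $3$ (resp.\ $4$), by the surjective stability results of Suslin--Vaserstein. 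Concretely, $E_n(S)=E_3(S)$ and $ESp_{2n}(S)=ESp_4(S)$ for all $n$ large once we restrict to matrices coming from $SL_2(S)$, using $Um_n(S)=e_1E_n(S)$ for $n\geq 4$ over a dimension-$2$ ring (as already noted in the excerpt via \cite[\S4]{Suslin76}). This reduces everything to the heart of the matter.

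The central claim is the middle equality $SL_2(S)\cap E_3(S)=SL_2(S)\cap ESp_4(S)$. The inclusion $SL_2(S)\cap ESp_4(S)\subseteq SL_2(S)\cap E_3(S)$ should be the routine direction, since the symplectic elementary group sits inside the linear one after an appropriate identification, so a matrix that is stably elementary symplectic is in particular stably elementary, and stabilization again brings it down to $E_3$. The substantive inclusion is $SL_2(S)\cap E_3(S)\subseteq SL_2(S)\cap ESp_4(S)$: I must show that a $2\times2$ matrix which is stably elementary in the linear sense is already stably elementary symplectically. This is exactly where Proposition \ref{forget} enters. A matrix $M\in SL_2(S)$ lies in $SL_2(S)\cap E_3(S)$ precisely when its class in $SK_1(S)$ is trivial, and it lies in $SL_2(S)\cap ESp_4(S)$ precisely when its class in $K_1Sp(S)$ is trivial; the bridge between these two conditions is the forgetful homomorphism $f:K_1Sp(S)\to K_1(S)$.

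The key step, then, is to translate membership in each intersection into a $K$-theoretic statement and play them off against the injectivity of $f$. Proposition \ref{forget} tells us that $f:K_1Sp(S)\to K_1(S)$ is split injective, hence injective. Given $M\in SL_2(S)\cap E_3(S)$, I would lift $M$ via the symplectic symbol $\Psi:Um_2(S)\to K_1Sp(S)$ to a class $[M]_{sp}\in K_1Sp(S)$ whose image under the forgetful map is the class $[M]\in SK_1(S)$; since $M\in E_3(S)$ this latter class vanishes, so $f([M]_{sp})=0$, and injectivity of $f$ forces $[M]_{sp}=0$, i.e.\ $M\in ESp_4(S)$ after the identification $MSp_2(S)\simeq K_1Sp(S)$. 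The hard part will be making this dictionary precise: I must verify that the symplectic and linear symbols are genuinely compatible with $f$ on the nose (so that $f\circ\Psi=\Phi$ as Mennicke symbols on $Um_2(S)$), and that vanishing of a class in $SK_1$ or $K_1Sp$ really does correspond to membership in $E_3$ or $ESp_4$ rather than merely in the stable elementary groups—this is precisely the content of the stabilization isomorphisms $SL_2/(SL_2\cap E_3)\simeq SK_1$ and $MSp_2\simeq K_1Sp$ recorded above, which let me descend the stable conclusion back to size $2$. The characteristic restriction (different from $2,3$) presumably enters through the input results on $ESp$ and the Gersten--Grothendieck--Witt machinery underlying Proposition \ref{forget}.
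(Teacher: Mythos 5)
Your proposal follows essentially the same route as the paper's proof: the core step is identical, namely that the split injectivity of the forgetful map $f:K_1Sp(S)\to K_1(S)$ from Proposition \ref{forget} forces the kernels of $SL_2(S)\to K_1Sp(S)$ and of $SL_2(S)\to K_1(S)$ --- that is, $SL_2(S)\cap ESp(S)$ and $SL_2(S)\cap E(S)$ --- to coincide, after which one descends to $ESp_4$ and $E_3$. The one place where your justification is too weak is precisely that descent: for a ring of dimension $2$ the equalities $SL_2(S)\cap E(S)=SL_2(S)\cap E_3(S)$ and $SL_2(S)\cap ESp(S)=SL_2(S)\cap ESp_4(S)$ sit below the standard injective stability range (routine stability only identifies $SL_2\cap E(S)$ with $SL_2\cap E_4(S)$, and similarly on the symplectic side), so they do not follow from Suslin--Vaserstein surjective stability or from $Um_n(S)=e_1E_n(S)$; they are the nontrivial prestabilization theorems of Rao (for $E_3$) and Basu--Rao (for $ESp_4$) that the paper invokes at exactly this point.
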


\begin{proof}
The diagram
$$\xymatrix{0\ar[r] & SL_2(S)\cap ESp(S)\ar[r]\ar@{-->}[d] & SL_2(S)\ar[r]\ar@{=}[d] & K_1Sp(S)\ar[d]^-f \\
0\ar[r] & SL_2(S)\cap E(S)\ar[r] & SL_2(S)\ar[r] & K_1(S)}$$
and Proposition \ref{forget} show that $SL_2(S)\cap ESp(S)=SL_2(S)\cap E(S)$. Now 
$$SL_2(S)\cap ESp(S)=SL_2(S)\cap ESp_4(S)$$ 
by \cite[Theorem 2]{Basu10} and $SL_2(S)\cap E(S)=SL_2(S)\cap E_3(S)$ by \cite[\S3]{Rao94}.
\end{proof}

We can finally prove the analogue of Bass-Kubota theorem for surfaces:

\begin{thm}
Let $S$ be a smooth affine surface over an algebraically closed field of characteristic different from $2,3$. Then 
$$MS_2(S)=Um_2(S)/SL_2(S)\cap E_3(S)=Um_2(S)/SL_2(S)\cap ESp_4(S).$$
\end{thm}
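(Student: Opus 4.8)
The plan is to identify $MS_2(S)$ with the universal symplectic Mennicke symbol $MSp_2(S)\simeq K_1Sp(S)$, and then to identify $K_1Sp(S)$ with the orbit set $W:=Um_2(S)/SL_2(S)\cap E(S)$. By Lemma \ref{inter} the orbit sets $Um_2(S)/SL_2(S)\cap E_3(S)$, $Um_2(S)/SL_2(S)\cap ESp_4(S)$ and $W$ all coincide, so it is enough to prove $MS_2(S)=W$, the comparison map being the one induced by $ms\colon Um_2(S)\to MS_2(S)$ (which factors through $W$ by \textbf{ms1} and Lemma \ref{inter}).

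First I would show $MS_2(S)\simeq MSp_2(S)$. The matrix $w_0=\left(\begin{smallmatrix}0&-1\\1&0\end{smallmatrix}\right)$ lies in $SL_2(S)\cap E_2(S)$ and sends $(a,b)$ to $(b,-a)$ on the right, so \textbf{ms1} gives the symmetry $ms(a,b)=ms(b,-a)$. Applying \textbf{ms2} in the first variable $-a$ then yields $ms(a,b)\cdot ms(a,c^2)=ms(b,-a)\cdot ms(c^2,-a)=ms(bc^2,-a)=ms(a,bc^2)$, which is exactly \textbf{msp2}; and \textbf{msp1} is \textbf{ms1} by Lemma \ref{inter}. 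Hence $ms$ is a symplectic Mennicke symbol and induces $\alpha\colon MSp_2(S)\to MS_2(S)$ with $\alpha\circ msp=ms$. To see $\alpha$ is an isomorphism I would use the forgetful map: $\Phi$ induces $\overline\Phi\colon MS_2(S)\to SK_1(S)$, and since $\Phi=f\circ\Psi$ the homomorphisms $\overline\Phi\circ\alpha$ and $f\colon MSp_2(S)\simeq K_1Sp(S)\to SK_1(S)$ agree on the generating symbols $msp(v)$, hence coincide. By Proposition \ref{forget} the map $f$ is injective, so $\alpha$ is injective; it is surjective since its image contains the symbols $ms(v)=\alpha(msp(v))$, which generate $MS_2(S)$. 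Thus $MS_2(S)\simeq MSp_2(S)\simeq K_1Sp(S)$.

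It remains to prove that $\overline\Psi\colon W\to K_1Sp(S)$ (the factorisation of $\Psi$, valid by \textbf{msp1} and Lemma \ref{inter}) is bijective. For surjectivity I would use $K_1Sp(S)=Sp_4(S)/ESp_4(S)$: given $\sigma\in Sp_4(S)$, its first row lies in $Um_4(S)=e_1ESp_4(S)$, so after right multiplication by an element of $ESp_4(S)$ we may assume it equals $e_1$; the resulting matrix reduces modulo $ESp_4(S)$ to an element of $Sp_2(S)=SL_2(S)$, whose first row $v$ satisfies $\Psi(v)=[\sigma]$. For injectivity, suppose $\Psi(v)=\Psi(w)$ and complete $v,w$ to $M_v,M_w\in SL_2(S)=Sp_2(S)$, regarded in $Sp_4(S)$; then $N:=M_vM_w^{-1}\in SL_2(S)$ lies in $ESp_4(S)$, so $N\in SL_2(S)\cap ESp_4(S)=SL_2(S)\cap E(S)$ by Lemma \ref{inter}. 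As $E(S)$ is normal in $GL(S)$, the conjugate $P:=M_w^{-1}NM_w$ again lies in $SL_2(S)\cap E(S)$, and $v=e_1M_v=e_1M_wP=wP$, so $v$ and $w$ lie in the same orbit. Hence $\overline\Psi$ is bijective, and composing $W\xrightarrow{\ \overline\Psi\ }K_1Sp(S)\simeq MSp_2(S)\xrightarrow{\ \alpha\ }MS_2(S)$, which recovers the natural map $\overline{ms}$, gives $MS_2(S)=W$.

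The hard part will be the two injectivity statements, which carry all the geometric content: the injectivity of $\alpha$ depends on the split injectivity of the forgetful map $f\colon K_1Sp(S)\to K_1(S)$ from Proposition \ref{forget}, while the injectivity of $\overline\Psi$ depends on the equality $SL_2(S)\cap ESp_4(S)=SL_2(S)\cap E(S)$ from Lemma \ref{inter}, which is precisely what makes the conjugate $P$ elementary. The surjectivity of $\overline\Psi$, and the fact that every class in $K_1Sp(S)$ is a single symbol, are comparatively routine, resting only on $Um_4(S)=e_1ESp_4(S)$ and symplectic stabilisation.
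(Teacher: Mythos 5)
Your proof is correct and follows essentially the same route as the paper: identify $MS_2(S)$ with $MSp_2(S)\simeq K_1Sp(S)$ using the injectivity of the forgetful map (Proposition \ref{forget}) together with the equality of intersections in Lemma \ref{inter}, and then pass to the orbit set. You additionally spell out two steps the paper leaves implicit --- the derivation of \textbf{msp2} from \textbf{ms1}--\textbf{ms2} via the symmetry $ms(a,b)=ms(b,-a)$, and the explicit bijection between $K_1Sp(S)$ and $Um_2(S)/SL_2(S)\cap E(S)$ --- both of which are carried out correctly.
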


\begin{proof}
Since $SL_2(S)\cap E(S)=SL_2(S)\cap ESp(S)$, we see that the rules ms1. and msp1. are equivalent. It follows that any Mennicke symbol is a symplectic Mennicke symbol. We therefore get a homomorphism
$$\theta:MSp_2(S)\to MS_2(S)$$
such that $\theta (msp(a,b))=ms(a,b)$. This map is surjective by definition. Consider the commutative diagram
$$\xymatrix{MSp_2(S)\ar[d]_-\Psi\ar[r]^-\theta & MS_2(S)\ar[d]^-\Phi \\
K_1Sp(S)\ar[r] & SK_1(S). }$$
Since $\Psi$ and $K_1Sp(S)\to SK_1(S)$ are monomorphisms (Proposition \ref{forget}), it follows that $\theta$ is also injective. We can now use Corollary \ref{inter} to conclude.
\end{proof}

\begin{rem}
Contrary to the Bass-Kubota theorem for smooth affine curves, we do not have $MS_2(S)=SK_1(S)$ in general. Indeed, the Brown-Gersten-Quillen spectral sequence shows that $SK_1(S)=MS_2(S)\oplus MS_3(S)$. The right hand term is uniquely divisible prime to the characteristic of the base field and not trivial in general. 
\end{rem}


\bibliography{../../Bib_general/General}{}
\bibliographystyle{plain}


\end{document}